\documentclass[reqno]{amsart}
\usepackage[margin=1.38in]{geometry}

\pdfoutput=1

\usepackage[foot]{amsaddr}

\usepackage[table,dvipsnames,svgnames,x11names,hyperref]{xcolor}
\usepackage[colorlinks,citecolor=Mahogany,linkcolor=Mahogany,urlcolor=Mahogany,filecolor=Mahogany]{hyperref}
\usepackage{mathtools}
\usepackage{multirow}
\usepackage{tikz}
\usepackage{tikz-cd}
\usepackage{booktabs}
\usepackage[noadjust,nocompress]{cite}
\usepackage[capitalize]{cleveref}
\usepackage[shortlabels]{enumitem}

\newtheorem{theorem}{Theorem}[section]
\newtheorem{lemma}[theorem]{Lemma}
\newtheorem{proposition}[theorem]{Proposition}
\newtheorem{corollary}[theorem]{Corollary}
\theoremstyle{definition}
\newtheorem{definition}[theorem]{Definition}

\theoremstyle{remark}
\newtheorem{example}[theorem]{Example}
\newtheorem{remark}[theorem]{Remark}
\newtheorem*{remark*}{Remark}

\newtheorem{question}[theorem]{Question}

\newcommand{\deff}[1]{\textbf{#1}} %
\DeclareMathOperator{\img}{img}
\newcommand{\Id}{\mathrm{id}}
\newcommand{\free}{\mathcal{L}}
\newcommand{\isomorphic}{\ensuremath{\cong}}
\DeclarePairedDelimiter\abs{\lvert}{\rvert}

\definecolor{darkred}{RGB}{139,0,0}

\newcommand{\countInt}{\mathfrak{I}}
\newcommand{\countSum}{\mathfrak{S}}

\DeclareMathOperator{\Prob}{P}
\DeclareMathOperator{\Exp}{E}

\providecommand\given{}
\newcommand\SetSymbol[1][]{%
  \nonscript\:#1\vert{}
  \allowbreak{}
  \nonscript\:
  \mathopen{}}
\DeclarePairedDelimiterX\Set[1]\{\}{%
  \renewcommand\given{\SetSymbol[]}
  #1
}

\newcommand{\singleton}{\Set{*}}

\newcommand{\bR}{\mathbb{R}}
\newcommand{\cI}{\mathcal{I}}
\newcommand{\cG}{\mathcal{G}}

\newcommand{\cat}[1]{\ensuremath{\mathsf{#1}}}
\newcommand{\cSet}{\cat{Set}}
\newcommand{\cGraph}{\cat{Graph}}
\newcommand{\cVect}{\cat{Vec}}

\newcommand{\myeps}[1]{\rotatebox{45}{$\varepsilon = #1$}}
\newcommand{\mysigma}[1]{\rotatebox{45}{$\sigma = #1$}}
\newcommand{\tinymatrix}[1]{\begingroup\setlength\arraycolsep{2pt}\footnotesize
  \begin{pmatrix} #1\end{pmatrix}\endgroup}
\newcommand{\onetotwo}{\tinymatrix{1 \\ 0}}
\newcommand{\merge}{\tinymatrix{1 & 1}}
\newcommand{\project}{\tinymatrix{1 & 0}}

\newcommand{\realindexing}{\bR_{\geq 0}\times\bR}

\newcommand{\myAcknowledgements}{The authors thank Jan Jendrysiak for helpful discussions. We
  are also grateful to the anonymous reviewers for their careful
  reading of our manuscript and their detailed comments and suggestions.}
\newcommand{\myFunding}{This research has been supported by the Austrian Science Fund (FWF) grant P 33765-N.}

\newif\ifarxiv{}

\arxivtrue{}

\AtBeginDocument{%
	\def\MR#1{}
}

\makeatletter
\renewcommand\subparagraph{\@startsection{subparagraph}{5}%
  \z@{.5\linespacing\@plus.7\linespacing}{-.5em}%
  {\normalfont\bfseries}}
\makeatother

\setenumerate[1]{label=(\roman*),}
\setitemize[1]{label=\raisebox{0.25ex}{\tiny$\bullet$}}

\title{Decomposition of zero-dimensional persistence modules via rooted subsets}

\author[Ángel Javier Alonso]{Ángel Javier Alonso}
\author[Michael Kerber]{Michael Kerber}
\address{Institute of Geometry, Graz University of Technology, Austria.}
\email{alonsohernandez@tugraz.at}
\email{kerber@tugraz.at}

\begin{document}

\ifarxiv{}\else\maketitle\fi

\begin{abstract}
  We study the decomposition of zero-dimensional persistence modules, viewed as
  functors valued in the category of vector spaces factorizing through sets.
  Instead of working directly at the level of vector spaces, we take a step back
  and first study the decomposition problem at the level of sets.

  This approach allows us to define the combinatorial notion of \textit{rooted
    subsets}. In the case of a filtered metric space $M$, rooted subsets relate
  the clustering behavior of the points of $M$ with the decomposition of the
  associated persistence module. In particular, we can identify intervals in
  such a decomposition quickly. In addition, rooted subsets can be understood as a
  generalization of the elder rule, and are also related to the notion of
  constant conqueror of Cai, Kim, Mémoli and Wang. As an application, we give a
  lower bound on the number of intervals that we can expect in the decomposition
  of zero-dimensional persistence modules of a density-Rips filtration in
  Euclidean space: in the limit, and under very general circumstances, we can
  expect that at least 25\% of the indecomposable summands are interval modules.
\end{abstract}

\ifarxiv\maketitle\else\fi

\section{Introduction}
Multiparameter persistent homology is an active research area in topological
data analysis. The motivation is that in many datasets there are multiple
parameters that deserve attention in a multiscale analysis~\cite{carlssonTheoryMultidimensionalPersistence2009,mcinnesAcceleratedHierarchicalDensity2017,buchetTopologicalAnalysisScalar2015}.
Concretely, when analyzing point clouds, we want to consider the distances
between points, but also potentially remove points of low density.

A central object of persistent homology is the \textit{persistence module}, which
tracks algebraically how the topological features of the data change as we move
through the parameter space. In the single-parameter case, every persistence
module decomposes into a collection of intervals, called the \textit{persistence
  barcode}~\cite{collinsBarcodeShapeDescriptor2004}, where each interval
represents the lifetime of a topological feature in the data. In the
multiparameter setting, there is a generalized notion of interval, which again
represents the lifetime of a topological feature, but decomposing a
multiparameter persistence module into intervals is not always possible, and one might be left with non-interval indecomposable persistence modules
that lead to complications, both
theoretically~\cite{carlssonTheoryMultidimensionalPersistence2009,buchetEvery1DPersistence2020,buchetRealizationsIndecomposablePersistence2018,mooreHyperplaneRestrictionsIndecomposable2020}
and computationally~\cite{deyGeneralizedPersistenceAlgorithm2022b,asashibaIntervalDecomposability2D2021,bjerkevikComputingInterleavingDistance2019}.

In fact, the classification of such indecomposable persistence modules is
thought to be out of reach: certain involved posets are of \textit{wild
  representation type}, even when accounting for certain
simplifications~\cite{bauerCotorsionTorsionTriples2020}. Moreover, infinite
families of complicated indecomposable persistence modules can be realized by simple geometric
constructions~\cite{buchetRealizationsIndecomposablePersistence2018}, and, most
recently, it has been shown in~\cite{bauerGenericTwoParameterPersistence2022}
that multiparameter persistence modules are, generically, close to being
indecomposable, under the interleaving metric (we refer
to~\cite{bauerGenericTwoParameterPersistence2022} for a precise statement).

Still, the mentioned complications do not imply that the persistence modules
that come up in practice are close to indecomposable, or that they are not
decomposable into intervals. Indeed, is the decomposition of multiparameter
persistence modules as badly behaved in practice as we can expect in theory? The
authors of~\cite{bauerCotorsionTorsionTriples2020} and those
of~\cite{bauerGenericTwoParameterPersistence2022} state similar questions.

As
an initial test, we computed the decomposition of persistence modules for a
standard zero-dimensional construction, using a prototypical implementation of
the algorithm by Dey and Xin~\cite{deyGeneralizedPersistenceAlgorithm2022b}
(this implementation will be discussed in another paper). As we see
in~\cref{tab:intervals}, the assumption that persistence modules can be
decomposed completely into intervals seems to be false most of the time, at least in this
setting. However, \cref{tab:intervals} also shows that in all tested instances,
\emph{most} indecomposable summands are indeed intervals.

\begin{table*}

\caption{\label{tab:intervals}Number of intervals in the decomposition of zero-dimensional persistence modules for density-Rips filtrations.
    We tried both \textit{clustered} samples where the points were sampled by a multivariate Gaussian distribution around $5$ peaks,
    and \textit{uniform} samples in the unit square.
    The density parameter was computed via a Gaussian kernel density estimate (\textit{kde}) or a \textit{random} density was assigned.
    The table shows the number of intervals for $5$ independent test runs;
    for $n$ points, the module is interval-decomposable if the number of intervals is $n$.
    This only happens for one run.}
\centering
\begin{tabular}[t]{llrrrrrrrrrr}
\toprule
\multicolumn{2}{c}{ } & \multicolumn{5}{c}{100 points} & \multicolumn{5}{c}{500 points} \\
\cmidrule(l{3pt}r{3pt}){3-7} \cmidrule(l{3pt}r{3pt}){8-12}
Sample & Densities & Run 1 & 2 & 3 & 4 & 5 & Run 1 & 2 & 3 & 4 & 5\\
\midrule
\cellcolor{gray!6}{clustered} & \cellcolor{gray!6}{kde} & \cellcolor{gray!6}{100} & \cellcolor{gray!6}{98} & \cellcolor{gray!6}{95} & \cellcolor{gray!6}{98} & \cellcolor{gray!6}{98} & \cellcolor{gray!6}{474} & \cellcolor{gray!6}{487} & \cellcolor{gray!6}{478} & \cellcolor{gray!6}{479} & \cellcolor{gray!6}{479}\\
uniform & kde & 88 & 88 & 86 & 88 & 86 & 444 & 447 & 433 & 453 & 457\\
\cellcolor{gray!6}{clustered} & \cellcolor{gray!6}{random} & \cellcolor{gray!6}{77} & \cellcolor{gray!6}{86} & \cellcolor{gray!6}{87} & \cellcolor{gray!6}{88} & \cellcolor{gray!6}{76} & \cellcolor{gray!6}{397} & \cellcolor{gray!6}{381} & \cellcolor{gray!6}{390} & \cellcolor{gray!6}{380} & \cellcolor{gray!6}{386}\\
uniform & random & 76 & 79 & 75 & 75 & 70 & 376 & 361 & 366 & 355 & 377\\
\bottomrule
\end{tabular}
\end{table*}

This begs the question whether we can provably expect many intervals in general.
In addition, knowledge of the intervals can greatly simplify and speed up
computational tasks for persistence modules: for instance, a popular way to
analyze 2-parameter persistence modules is by considering $1$-dimensional
restrictions, so-called slices, resulting in a parameterized family of
persistent
barcodes~\cite{lesnickInteractiveVisualization2D2015,persistable,rolleStableConsistentDensitybased2021,mcinnesAcceleratedHierarchicalDensity2017}.
Every interval of the $2$-dimensional persistence module gives one bar in the
barcode of the slice, by intersecting the slice with the interval. Thus, by
knowing the intervals, existing algorithms can focus on the non-interval
``core'' of the problem, which is typically of much smaller size.

The practical problem of the described approach is that decomposing a
multiparameter persistence module is costly, despite ongoing
efforts~\cite{deyGeneralizedPersistenceAlgorithm2022b}. However, to leverage the
knowledge of intervals there is no need to compute a total decomposition, or to
even identify all intervals. It suffices to have a method to ``peel off''
intervals from a persistence module quickly. Thus, we pose the question whether there
exist methods that work very fast in practice and still are capable of detecting
many intervals.

\subparagraph{Contributions.}
We focus on the case of zero-dimensional persistence modules.
Already this case is of practical interest because of its connection to
hierarchical clustering methods (see the Related work section below), and has received
attention
recently~\cite{caiElderRuleStaircodesAugmentedMetric2021,bauerCotorsionTorsionTriples2020,rolleStableConsistentDensitybased2021,brodzkiComplexityZerodimensionalMultiparameter2020}.
In this context, we give some answers to the questions stated above:

For a point cloud $M$, a \textit{nearest neighbor pair} is a pair $(x, y)\in M\times M$
such that $y$ is the nearest neighbor of $x$ and $x$ is the nearest neighbor of
$y$ (breaking ties with a fixed total order). The theory we develop says that for a zero-dimensional persistence module of
the density-Rips bifiltration (for any density estimation
function), there are at least as many intervals as there are nearest neighbor
pairs in $M$. These intervals are easily determined by the nearest neighbor
pairs, and we refer to them as \emph{NN-intervals}. Since all nearest neighbor pairs
can be computed in $O(n\log n)$ time~\cite{clarksonFastAlgorithmsAll1983,vaidyaAnOLognAlgorithm1989}, this
yields a fast method to compute all NN-intervals of the decomposition. Moreover,
we can expect many NN-intervals: using previous results on nearest neighbor
graphs, we show that if $M$ is sampled independently from an arbitrary, almost
continuous density function, at least a quarter of the summands
in the decomposition are intervals as $n\to\infty$. To our knowledge, this is
the first result proving a non-constant lower bound on the number of intervals
in a decomposition.

To arrive at this result, we use the following main idea: Instead of studying
the decomposition of the persistence module directly in the category of (graded)
vector spaces, we work in the category of \emph{persistent sets}, whose objects
can be interpreted as a two-parameter hierarchical clustering. The decomposition
of a persistence module is governed by its idempotent endomorphisms, so we look
for idempotent endomorphisms not of persistence modules, but of persistent
sets, which are simpler. We show that such idempotent
endomorphisms can be translated into \emph{rooted subsets}, which are subsets of
points that get consistently merged with a fixed point in the hierarchical
clustering. Moreover, rooted subsets with a single element correspond to
intervals in the associated persistence module.

Instead of peeling off intervals from the persistence module, we peel off rooted
subsets from the persistent set. The advantage is that the remaining structure
is still a hierarchical clustering, and the process can be iterated.

\subparagraph{Related work.} Multiparameter persistent sets and zero-dimensional
persistence modules, as we will study them here, are related to a
multiparametric approach to the clustering problem first considered by Carlsson
and Mémoli~\cite{carlssonMultiparameterHierarchicalClustering2010a}. The need
for multiple parameters, density and scale, is justified by an axiomatic
approach to
clustering~\cite{kleinbergImpossibilityTheoremClustering,carlssonCharacterizationStabilityConvergence2010,carlssonClassifyingClusteringSchemes2013}.
The application of techniques from multiparameter persistence homology, like
persistence modules and interleavings, to this setting has attracted attention
recently~\cite{mcinnesAcceleratedHierarchicalDensity2017,caiElderRuleStaircodesAugmentedMetric2021,bauerCotorsionTorsionTriples2020,rolleStableConsistentDensitybased2021,persistable}.

Cai, Kim, Mémoli, and Wang~\cite{caiElderRuleStaircodesAugmentedMetric2021}
define a useful summary for zero-dimensional persistence modules coming from
density-Rips, called the \textit{elder-rule-staircode}, inspired by the elder
rule~\cite{edelsbrunnerComputationalTopologyIntroduction2010}. They also
introduce the related concept of constant conqueror, and they ask whether a
constant conqueror induces an interval in the decomposition of the associated
persistence module. We answer this question in the negative
with~\cref{ex:counterexample}, and, in contrast, we show that a \textit{rooted
  generator}, as introduced here, does induce an interval in the decomposition
(\cref{thm:main}).

Brodzki, Burfitt, and
Pirashvili~\cite{brodzkiComplexityZerodimensionalMultiparameter2020} also study
the decomposition of zero-dimensional persistence modules. They identify a class
of persistence modules, called \textit{semi-component modules}, that may appear
as summands in the decomposition of zero-dimensional modules, but that are still
hard to classify. Their methods have been of great inspiration, and
in~\cref{thm:lower_bound} we give another proof, within the theory we develop,
of a theorem of theirs.

\ifarxiv{}
\subparagraph{Acknowledgements.} \myAcknowledgements{} \myFunding{}
\fi

\section{Preliminaries}\label{sec:preliminaries}

\subparagraph{Persistent sets and persistence modules.} In what follows, we let
$P$ be a finite poset, which we will view as a category. A
\deff{persistence module} (over $P$) is a functor from $P$ to the category $\cVect$ of
finite dimensional vector spaces, over a fixed field $K$. Such a functor
$F\colon P\to\cVect$ associates to each \deff{grade} $p\in P$ a finite dimensional vector
space $F_{p}$ and to each morphism $p\leq q$ in $P$ a linear map
$F_{p\to q}\colon F_{p}\to F_{q}$, in such a way that $F_{p\to p} = \Id$ and
composition is preserved. We see persistence modules as the objects of the
functor category $\cVect^{P}$, where natural transformations are the morphisms.
In this sense, a morphism $f\colon F\to G$ of persistence modules is a family of
maps $\Set{f_{p}\colon F_{p}\to G_{p}}_{p\in P}$ such that for every two $p\leq q$ the following
diagram commutes
\begin{equation*}
  \begin{tikzcd}
    F_{p} \rar{F_{p\to q}}\dar{f_{p}} & F_{q} \dar{f_{q}} \\
    G_{p} \rar{G_{p\to q}} & G_{q}.
  \end{tikzcd}
\end{equation*}

Similarly, a \deff{persistent set} (over $P$) is a functor from $P$ to $\cSet$, the
category $\cSet$ of finite sets, and morphisms of persistent sets are natural
transformations as above.

We can obtain a persistence module from a persistent set by the application of
the \deff{linearization functor} $\cSet\to\cVect$ that takes each set to the free
vector space generated by it. This linearization functor induces a functor
$\free\colon\cSet^{P}\to\cVect^{P}$ by postcomposition.

\subparagraph{From geometry to persistent sets.} Let $(M, d)$ be a finite metric
space, and consider a function $f\colon M\to\bR$. We can understand $f$ as an
assignment of a \textit{density} to each of the points of $M$; that is, a
density estimation function~\cite{silvermanDensityEstimationStatistics1986}. We
assume that $f$ assigns lower values to points of \textit{higher} density.
Following~\cite{caiElderRuleStaircodesAugmentedMetric2021}, we call the triple
$(M, d, f)$ an \deff{augmented metric space}. We construct a persistent set, the
\deff{density-Rips persistent set} of $(M, d, f)$, that tracks how the
clustering of points of $M$ changes as we change the density and scale
parameters, in a sense that we make precise shortly.

First, for a fixed scale parameter $\varepsilon \geq 0$, we define the
\deff{geometric graph of $M$ at $\varepsilon$}, denoted by
$\cG_{\varepsilon}(M)$, as the undirected graph on the vertex set $M$ and edges
$(x, y)$ where $d(x, y)\leq\varepsilon$. The connected components of
$\cG_{\varepsilon}(M)$, as $\varepsilon$ goes from $0$ to $\infty$, form the
clusters of the dendrogram obtained via the single-linkage clustering method.

To introduce the density, for each $\sigma\in\bR$ we let
$M_{\sigma} \coloneqq \Set{x\in M\given f(x) \leq \sigma} \subseteq M$ be the metric
subspace of points with (co)density below $\sigma$. For any two
$\sigma \leq \sigma'$, $M_{\sigma}\subseteq M_{\sigma'}$ and by taking each
$(\varepsilon, \sigma)$ to the graph $\cG_{\varepsilon}(M_{\sigma})$, we obtain a
functor $\cG(M, f)\colon\realindexing\to\cGraph$, where the order in $\realindexing$ is
given by $(\varepsilon,\sigma)\leq(\varepsilon',\sigma')$ if and only if
$\varepsilon\leq\varepsilon'$ and $\sigma\leq\sigma'$. We then consider the
\deff{connected components functor} $\pi_{0}\colon\cGraph\to\cSet$, that takes each
graph to its set of connected components. In this way, we obtain a functor
$\pi_{0}\circ\cG(M, f)\colon\realindexing\to\cSet$.

\begin{remark}
  The linearized persistence module
  $\free(\pi_{0}\circ\cG(M, f))\colon\realindexing\to\cVect$ is isomorphic to the
  persistence module obtained by applying zero-dimensional homology at graph
  level, $H_{0}\circ\cG(M, f)\colon\realindexing\to\cVect$. In this sense, the
  construction we have described is the zero-dimensional level of the
  density-Rips filtration, which is standard in multiparameter persistent
  homology
  (see~\cite{carlssonTheoryMultidimensionalPersistence2009,blumbergStability2ParameterPersistent2022a}
  and also~\cite{caiElderRuleStaircodesAugmentedMetric2021}).
\end{remark}

We can understand the functor $\pi_{0}\circ\cG(M, f)\colon\realindexing\to\cSet$
as a persistent set $S\colon P\to\cSet$ indexed by a finite grid
$P\subseteq\realindexing$ in the following way. We consider the set of distances
$D \coloneqq \Set{d(x, y)\given x, y\in M}$ and densities
$T\coloneqq\Set{f(x)\given x\in M}$, and define a finite grid
$P\coloneqq D\times T\subset\realindexing$. Finally, we define the persistent set
$S\colon P\to\cSet$ by taking each $(\varepsilon,\sigma)\in P$ to
$(\pi_{0}\circ\cG(M, f))_{(\varepsilon,\sigma)}$, and similarly for the
morphisms.

\begin{definition}\label{def:density_rips}
  Let $(M, d, f)$ be an augmented metric space. We define its \deff{density-Rips
    persistent set} as the functor $S\colon P\to\cSet$, constructed as
  above.
\end{definition}

\subparagraph{Decomposition of persistence modules.} We can study persistence
modules via their decomposition. For two persistence modules $F$ and $G$ their
direct sum $F\oplus G$ is the persistence module given by taking direct sums
pointwise, $(F\oplus G)_{p} = F_{p}\oplus G_{p}$. A persistence module is
\deff{indecomposable} if $F \isomorphic F_{1}\oplus F_{2}$ implies that either
$F_{1} = 0$ or $F_{2} = 0$. Since persistence modules are actual modules (see,
for instance,~\cite[Lemma 2.1]{botnanBottleneckStabilityRank2022}), by the
Krull-Schmidt theorem, a decomposition of a persistence module
$F = F_{1}\oplus F_{2}\oplus\dots F_{n}$ into indecomposable summands is unique
up to permutation and isomorphism of the summands.

Let $I$ be a non-empty connected subposet of a poset $P$ such that for any two
$i,j\in I$ and any $l\in P$, if $i\leq l \leq j$ then $l\in I$. The
\deff{interval module supported on $I$}, $\cI(I)\colon P\to\cVect$, is the
indecomposable (by, e.g.~\cite[Proposition 2.2]{botnanAlgebraicStabilityZigzag2018a}) persistence module
given by
\begin{equation*}
  \cI(I)_{p} =
  \begin{cases}
    K, & \text{if $p\in I$,}\\
    0, & \text{otherwise,}
  \end{cases}
  \quad\text{with internal maps}\quad
  \cI(I)_{p\to q} =
  \begin{cases}
    \Id, & \text{if $p,q\in I$,}\\
    0, & \text{otherwise.}
  \end{cases}
\end{equation*}
If $P$ is a totally ordered set, every persistence module over $P$ decomposes as
a direct sum of interval
modules~\cite{botnanDecompositionPersistenceModules2020}, but such a nice
decomposition does not exist in general for other posets.

\subparagraph{Decomposition and endomorphisms.} A direct sum
$X = X_{1}\oplus X_{2}$ of persistence modules is characterized up to
isomorphism by morphisms $\iota_{i}\colon X_{i}\to X$ and
$\pi_{i}\colon X\to X_{i}$ for $i = 1,2$ such that
$\pi_{i}\circ\iota_{i} = \Id_{X_{i}}$ and
$\iota_{1}\circ\pi_{1} + \iota_{2}\circ\pi_{2} = \Id_{X}$ (see, for
instance,~\cite{maclaneCategoriesWorkingMathematician1978}). In this case, for
each $i = 1, 2$, the maps $\iota_{i}$ and $\pi_{i}$ induce an endomorphism
$\varphi_{i} \coloneqq \iota_{i}\circ\pi_{i}$ of $X$. Such an endomorphism
$\varphi_{i}\colon X\xrightarrow{\pi_{i}} X_{i}\xrightarrow{\iota_{i}} X$ is also split:
\begin{definition}\label{def:split}
  In any category, we say that an endomorphism $\varphi\colon X\to X$ is
  \deff{split} if there exists an object $Y$ and a factorization
  $\varphi\colon X\xrightarrow{\pi} Y\xrightarrow{\iota} X$ such that
  $\pi\circ\iota = \Id_{Y}$.
\end{definition}
We will
use the following standard fact about split endomorphisms\ifarxiv{}\else{}
(proof in the full version)\fi{}:

\begin{lemma}\label{lem:universal_split}
  Let $\varphi\colon X\to X$ be a split endomorphism that has two factorizations
  $X\xrightarrow{\pi} Y \xrightarrow{\iota} X$ and
  $X\xrightarrow{\pi'} Y'\xrightarrow{\iota'} X$ with $\pi\circ\iota = \Id_{Y}$
  and $\pi'\circ\iota' = \Id_{Y'}$. Then $Y$ and $Y'$ are isomorphic.
\end{lemma}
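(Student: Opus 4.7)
The plan is to exhibit mutually inverse morphisms between $Y$ and $Y'$ built directly from the given factorizations. The key observation is that both factorizations witness the \emph{same} endomorphism, so $\iota\circ\pi = \varphi = \iota'\circ\pi'$. This equation will do all the work.

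First I would define candidate maps $f \coloneqq \pi'\circ\iota\colon Y\to Y'$ and $g \coloneqq \pi\circ\iota'\colon Y'\to Y$. Then I would compute
\begin{equation*}
  g\circ f \;=\; \pi\circ\iota'\circ\pi'\circ\iota \;=\; \pi\circ\varphi\circ\iota \;=\; \pi\circ\iota\circ\pi\circ\iota \;=\; \Id_{Y}\circ\Id_{Y} \;=\; \Id_{Y},
\end{equation*}
using $\iota'\circ\pi' = \varphi$ for the second equality, $\varphi = \iota\circ\pi$ for the third, and the splitting identity $\pi\circ\iota = \Id_{Y}$ twice for the fourth. The symmetric computation
\begin{equation*}
  f\circ g \;=\; \pi'\circ\iota\circ\pi\circ\iota' \;=\; \pi'\circ\varphi\circ\iota' \;=\; \pi'\circ\iota'\circ\pi'\circ\iota' \;=\; \Id_{Y'}
\end{equation*}
then shows that $f$ and $g$ are mutually inverse, hence $Y\cong Y'$.

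There is essentially no obstacle: the lemma is the standard fact that a split idempotent (or more generally a splitting of a given morphism) is unique up to unique isomorphism, and the only subtlety is choosing the right composites. Note that this argument is purely diagrammatic and works in any category, which is the level of generality used later in the paper when passing between $\cSet^{P}$ and $\cVect^{P}$.
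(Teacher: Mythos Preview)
Your proof is correct and essentially identical to the paper's own argument: the paper defines the same candidate isomorphisms $\pi'\circ\iota$ and $\pi\circ\iota'$ and verifies they are mutually inverse via the same chain of equalities. Your added remark that the argument is purely diagrammatic and works in any category is accurate and aligns with how the lemma is applied later.
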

\ifarxiv{}
\begin{proof}
  Consider the compositions
    $\pi'\circ\iota\colon Y \xrightarrow{\iota} X \xrightarrow{\pi'} Y'$ and
    $\pi\circ\iota'\colon Y' \xrightarrow{\iota'} X \xrightarrow{\pi} Y$.
  Using the definition of a split endomorphism we compute
  \begin{equation*}
    (\pi \circ \iota') \circ (\pi' \circ \iota) =
    \pi \circ (\iota' \circ \pi') \circ \iota  =
    \pi\circ\varphi\circ\iota =
    \pi\circ\iota\circ\pi\circ\iota = \Id_{Y},
  \end{equation*}
  and, similarly, $(\pi' \circ \iota) \circ (\pi \circ \iota') = \Id_{Y'}$.
  We conclude that $Y$ and $Y'$ are isomorphic.
\end{proof}
\fi
Every split endomorphism
$\varphi\colon X\xrightarrow{\pi} Y\xrightarrow{\iota} X$ is also
\deff{idempotent}, meaning that $\varphi\circ\varphi = \varphi$. Moreover, in
our categories of interest, namely persistent sets $\cSet^{P}$ and persistence
modules $\cVect^{P}$, every idempotent endomorphism splits through its
\deff{image}, see below. In
these two categories, we define the image of a morphism $f$, $\img f$, by
taking the image pointwise, that is, $(\img f)_{p} = f_{p}(S_{p})$. The
following two lemmas are standard\ifarxiv{}\else{} (proof in the full version)\fi{}.
\begin{lemma}\label{lem:split_image}
  Let $\varphi\colon X\to X$ be an idempotent endomorphism in $\cVect^{P}$ or $\cSet^{P}$.
  Then $f$ splits through its image: there exists a factorization
  $f\colon X\xrightarrow{\pi} \img \varphi \xrightarrow{\iota} X$ with
  $\pi\circ\iota = \Id_{\img \varphi}$.
\end{lemma}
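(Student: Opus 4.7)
The plan is to construct the factorization pointwise and use functoriality of the idempotent $\varphi$ to glue everything together into natural transformations.

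First, I would make $\img\varphi$ into a genuine sub-object (sub-persistent-set or sub-persistence-module) of $X$. Pointwise, set $(\img\varphi)_p \coloneqq \varphi_p(X_p)$. To get the internal maps, I would show that $X_{p\to q}$ restricts to a map $(\img\varphi)_p \to (\img\varphi)_q$: given $y = \varphi_p(x) \in (\img\varphi)_p$, naturality of $\varphi$ gives
\begin{equation*}
  X_{p\to q}(y) = X_{p\to q}(\varphi_p(x)) = \varphi_q(X_{p\to q}(x)) \in (\img\varphi)_q.
\end{equation*}
Defining $(\img\varphi)_{p\to q}$ as this restriction makes $\img\varphi$ a functor, and the pointwise inclusions $\iota_p\colon (\img\varphi)_p \hookrightarrow X_p$ assemble into a natural transformation $\iota\colon \img\varphi \to X$ tautologically.

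Next, I would define $\pi\colon X\to\img\varphi$ by $\pi_p(x) \coloneqq \varphi_p(x)$, viewed in $(\img\varphi)_p$. Naturality of $\pi$ reduces to the identity $X_{p\to q} \circ \varphi_p = \varphi_q \circ X_{p\to q}$, which is just naturality of $\varphi$ again. The factorization $\iota \circ \pi = \varphi$ holds by construction, and $\pi \circ \iota = \Id_{\img\varphi}$ is exactly where idempotency is used: for $y = \varphi_p(x) \in (\img\varphi)_p$,
\begin{equation*}
  \pi_p(\iota_p(y)) = \varphi_p(\varphi_p(x)) = \varphi_p(x) = y.
\end{equation*}

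The whole argument works uniformly in $\cSet^P$ and $\cVect^P$ because both arguments only use pointwise image and the fact that in each target category $\cSet$, $\cVect$ the image of a map is a well-behaved sub-object with the universal property being used (essentially, regular-epi/mono factorization). There is no genuine obstacle here: the only subtlety worth mentioning is to verify that the pointwise images do assemble into a subfunctor, which is a one-line consequence of the naturality of $\varphi$, as shown above. Combined with \cref{lem:universal_split}, this justifies calling $\img\varphi$ the canonical object through which $\varphi$ splits.
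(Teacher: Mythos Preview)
Your argument is correct. Both you and the paper factor $\varphi$ through its pointwise image, but the verification of $\pi\circ\iota=\Id_{\img\varphi}$ differs: the paper appeals to the abstract epi--mono factorization available in $\cVect^{P}$ (as an abelian category) and in $\cSet^{P}$, writes $\varphi=\iota\circ\pi$ with $\pi$ epi and $\iota$ mono, and then cancels in $\iota\circ(\pi\circ\iota)\circ\pi=\iota\circ\pi$ to deduce $\pi\circ\iota=\Id$. You instead construct $\pi$ and $\iota$ explicitly and check the identity on elements using $\varphi_p\circ\varphi_p=\varphi_p$. Your route is more elementary and self-contained (no appeal to abelian-category machinery or to the assertion that $\cSet^{P}$ has epi--mono factorizations), at the cost of being specific to these two targets; the paper's route is shorter once the categorical facts are granted and makes transparent that the result holds in any category with epi--mono factorizations. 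One small remark: your final sentence invoking \cref{lem:universal_split} is not needed for the lemma as stated---uniqueness of the splitting object is a separate fact, not part of what you are asked to prove here.
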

\ifarxiv{}
\begin{proof}
  In any abelian category, like $\cVect^{P}$, every morphism $f\colon X\to Y$
  has a factorization $X\xrightarrow{\pi} \img f \xrightarrow{\iota} Y$ where
  $\pi$ is an epimorphism and $\iota$ a monomorphism (see~\cite[Proposition VIII.3.1]{maclaneCategoriesWorkingMathematician1978}).
  It is easy to check that the same happens in $\cSet$ and $\cSet^{P}$.

  Now, consider an idempotent endomorphism $\varphi\colon X\to X$ and its epi-mono
  factorization $\varphi = \iota\circ\pi$ as above. Since it is idempotent, we have
  $\varphi\circ\varphi = (\iota\circ\pi)\circ(\iota\circ\pi) = \iota\circ\pi$.
  Since $\pi$ is an epimorphism and is right cancellable, and since $\iota$ is a
  monomorphism and is left cancellable, from
  $\iota\circ(\pi\circ\iota)\circ\pi = \iota\circ\pi$ we can obtain
  $\pi\circ\iota = \Id_{\img\varphi}$, as desired.
\end{proof}
\fi{}

\begin{lemma}\label{lem:split_decomposition}
  Let $F\colon P\to\cVect$ be a persistence module, and let
  $\varphi\colon F\to F$ be an idempotent endomorphism. Then $F$ decomposes as
  $\img(\Id_{F} - \varphi)\oplus\img\varphi$.
\end{lemma}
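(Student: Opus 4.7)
The plan is to reduce the statement to the explicit biproduct characterization of direct sums spelled out just before \cref{def:split}. Concretely, a direct sum $F \cong F_1 \oplus F_2$ is witnessed by a pair of splittings $\pi_i \circ \iota_i = \Id_{F_i}$ together with the completeness relation $\iota_1 \circ \pi_1 + \iota_2 \circ \pi_2 = \Id_F$. I will produce such data from $\varphi$ and $\Id_F - \varphi$.

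First I would observe that $\Id_F - \varphi$ is itself an idempotent endomorphism: since $\cVect^P$ is an additive category and $\varphi$ is idempotent,
\begin{equation*}
  (\Id_F - \varphi)^2 = \Id_F - 2\varphi + \varphi^2 = \Id_F - 2\varphi + \varphi = \Id_F - \varphi.
\end{equation*}
Applying \cref{lem:split_image} to both idempotents yields factorizations
\begin{equation*}
  \varphi \colon F \xrightarrow{\pi} \img \varphi \xrightarrow{\iota} F,
  \qquad
  \Id_F - \varphi \colon F \xrightarrow{\pi'} \img(\Id_F - \varphi) \xrightarrow{\iota'} F,
\end{equation*}
satisfying $\pi \circ \iota = \Id_{\img \varphi}$ and $\pi' \circ \iota' = \Id_{\img(\Id_F - \varphi)}$, and with $\iota \circ \pi = \varphi$ and $\iota' \circ \pi' = \Id_F - \varphi$.

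Then I would simply verify the biproduct relations: setting $F_1 = \img\varphi$ and $F_2 = \img(\Id_F - \varphi)$, the splittings above give $\pi_i \circ \iota_i = \Id_{F_i}$, and the completeness relation follows from
\begin{equation*}
  \iota \circ \pi + \iota' \circ \pi' = \varphi + (\Id_F - \varphi) = \Id_F.
\end{equation*}
By the characterization of direct sums, we obtain $F \cong \img\varphi \oplus \img(\Id_F - \varphi)$.

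There is essentially no obstacle here beyond bookkeeping; the only point that requires a little care is that the decomposition criterion used is the biproduct one (two splittings plus a completeness relation), rather than requiring additional orthogonality $\pi_i \circ \iota_j = 0$ for $i \neq j$. Fortunately the biproduct form is exactly what the excerpt introduces, and the completeness relation drops out of the tautology $\varphi + (\Id_F - \varphi) = \Id_F$, so no further work is needed.
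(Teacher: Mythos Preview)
Your proof is correct and follows essentially the same approach as the paper: split both $\varphi$ and $\Id_F-\varphi$ through their images via \cref{lem:split_image}, then invoke the biproduct characterization using $\varphi + (\Id_F-\varphi) = \Id_F$. You have simply written out explicitly what the paper compresses into two sentences.
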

\ifarxiv{}
\begin{proof}
  Applying~\cref{lem:split_image} to $\varphi$, we have a factorization
  $\varphi\colon F\xrightarrow{\pi} \img\varphi \xrightarrow{\iota} F$ with
  $\iota\circ\pi = \varphi$ and $\pi\circ\iota = \Id$. In turn, $\Id-\varphi$ is
  also an idempotent, which splits and satisfies
  $\Id - \varphi + \varphi = \Id$, which, by the characterization of the direct
  sum (the paragraph above~\cref{def:split}), yields a decomposition of the form
  $\img(\Id - \varphi)\oplus\img\varphi$.
\end{proof}
\fi{}

\section{Endomorphisms of persistent sets and rooted subsets}

As seen above, the decomposition of a persistence module is intimately related
to its idempotent endomorphisms. Our main idea is that, when studying the
decomposition of persistence modules of the form $\free S$, for a persistent set
$S\colon P\to\cSet$, we look for idempotent endomorphisms of $S$ and study their image under the
linearization functor $\free$.

\begin{definition}\label{def:generator}
  Given a persistent set $S$, a \deff{generator} is a pair $(p_{x}, x)$ with
  $x\in S_{p_{x}}$ such that $x$ is not in the image of any morphism
  $S_{q\to p_{x}}$ for any $q < p_{x}$. When it is clear, we will often suppress
  the grade $p_{x}$ from the notation, and directly write that $x\in S_{p_{x}}$ is a
  generator.

  There is an induced preorder on the generators of $S$: for two generators
  $x\in S_{p_{x}}$ and $y\in S_{p_{y}}$ we say that $(p_{x}, x) \leq (p_{y}, y)$
  if and only if $p_{x}\leq p_{y}$. This relation might not be antisymmetric,
  and so in general the preordered set of generators is not a poset.
\end{definition}

Generators are useful because an endomorphism $\varphi$ of a
persistent set $S\colon P\to\cSet$ is uniquely determined by the image of its
generators: for each $z\in S_{q}$ we have
$\varphi_{q}(z) = S_{p_{x}\to q}\circ\varphi_{p_{x}}(x)$ for some generator
$x\in S_{p_{x}}$, by
the commutativity property.

In linear algebra, an idempotent endomorphism can be thought as a projection onto
its image, that is, onto its fixed points. This point of view and the concept of
generators above motivates the following
definition, which plays a fundamental role in our work.

\begin{definition}
  A \deff{rooted subset} $A$ is a non-empty subset of the generators of $S$ such
  that there exists an idempotent endomorphism $\varphi$ of $S$ whose set of
  generators that are not fixed is precisely $A$. If a rooted subset is a
  singleton, $A = \Set{x}$, we say that $x$ is a \deff{rooted generator}.
\end{definition}

\begin{remark}
  In the case of an augmented metric space $(M, d, f)$ and its density-Rips
  persistent set $S$ of~\cref{def:density_rips} there exists a bijection between
  the points of $M$ and the generators of $S$. A point $x\in M$ first appears in
  the graph $\cG_{0}(M_{f(x)})$, where $x$ is always its own connected
  component. In what follows, we will often identify a point $x\in M$ with its
  generator $x\in S_{p_{x}}$. In this sense, we can understand an
  endomorphism of $S$ as an endomorphism of the set of points that is compatible
  with the connected components of all graphs $\cG_{\varepsilon}(M_{\sigma})$.
\end{remark}

We are especially interested in persistent sets obtained from (augmented) metric
spaces, and our objective is to relate rooted generators to the
geometry of these objects. Considering an augmented metric space
$(M, d, f)$ and its density-Rips persistent
set,~\cref{prop:criteria_rooted_generator} below characterizes rooted generators
by the clustering behavior of the points of $M$.

\begin{proposition}\label{prop:criteria_rooted_generator}
  Let $(M, d, f)$ be an augmented metric space and consider a point $x\in M$. If
  there exist some other point $y\in M$ such that
  \begin{enumerate}
    \item\label{enum:dense_condition} $f(y)\leq f(x)$ (i.e. $y$ is ``denser''
          than $x$), and
    \item\label{enum:connectedness_condition} whenever $x$ is in a cluster of
          more than one point, $y\in M$ is in the same cluster: for every
          $\cG_{\varepsilon}(M_{\sigma})$, if $x$ is path-connected to some
          other point then $x$ is path-connected to $y$,
  \end{enumerate}
  then the generator $(p_{x}, x)$ of the density-Rips persistent set $S$ of $M$
  is a rooted generator.

  Conversely, if $x$ is a rooted generator of $S$, then there exists a point
  $y\in M$ that satisfies conditions~\ref{enum:dense_condition}
  and~\ref{enum:connectedness_condition} above.
\end{proposition}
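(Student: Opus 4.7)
The plan is to handle both directions by constructing (respectively, extracting) a specific idempotent endomorphism of $S$ whose only non-fixed generator is $x$.

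For the forward direction, given $y\in M$ satisfying~(i) and~(ii), I would define $\varphi\colon S\to S$ grade by grade: at $q=(\varepsilon,\sigma)$ with $x\in M_\sigma$, set $\varphi_q([x]_q)=[y]_q$ (well-defined because $f(y)\leq f(x)\leq\sigma$), leave every other component fixed, and use $\Id$ at grades where $x\notin M_\sigma$. Both idempotence and the claim that $x$ is the sole non-fixed generator are then straightforward: at every grade $p_z=(0,f(z))$ each component is a singleton, so the only generator sent to something else is $x$ itself (mapped to $\{y\}\neq\{x\}$), and $\varphi_q$ fixes its image $[y]_q$ in every case (either $[y]_q=[x]_q$ by condition~(ii), or $[y]_q\neq[x]_q$ and the identity branch applies). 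The main obstacle is naturality: if a transition $q\leq q'$ merges some component $[z]_q\neq[x]_q$ into $[x]_{q'}$, the two routes around the naturality square a priori give $[z]_{q'}=[x]_{q'}$ and $[y]_{q'}$, but since $[x]_{q'}$ is then non-singleton, condition~(ii) forces $[y]_{q'}=[x]_{q'}$ and the square commutes. All other naturality cases are immediate.

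For the converse, given an idempotent $\varphi$ witnessing that $x$ is a rooted generator, I would extract $y$ directly. At $p_x=(0,f(x))$ every component of $S_{p_x}$ is a singleton, so $\varphi_{p_x}(\{x\})=\{y\}$ for a unique $y\in M_{f(x)}$ with $y\neq x$, immediately yielding~(i). For~(ii), I fix any $q$ for which $[x]_q$ is non-singleton and pick $z\in[x]_q\setminus\{x\}$. Naturality of $\varphi$ at the morphism $p_z\to q$, applied to the fixed generator $z$, gives $\varphi_q([x]_q)=[x]_q$; naturality at $p_x\to q$, applied to $x$, gives $\varphi_q([x]_q)=[y]_q$. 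Comparing the two yields $[y]_q=[x]_q$, i.e.\ $y\in[x]_q$, as desired.
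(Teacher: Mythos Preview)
Your proof is correct and follows essentially the same approach as the paper. The only cosmetic difference is that the paper defers the forward direction to the more general rooted-subset criterion (constructing $\varphi$ by specifying its values on generators and then checking consistency), whereas you construct $\varphi$ grade-by-grade and verify naturality directly; these are dual descriptions of the same idempotent, and the converse arguments are identical in substance.
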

\begin{proof}
  Before going into the proof, recall that, by the way we construct $S$
  and the inclusion $P\hookrightarrow\realindexing$, for each $q\in P$ there is an associated graph
  $\cG_{\varepsilon}(M_{\sigma})$, for some $(\varepsilon, \sigma)\in\realindexing$. Each
  element $z\in S_{q}$ is a connected component of this graph
  $\cG_{\varepsilon}(M_{\sigma})$, and the generators $x\in S_{p_{x}}$ such that
  $S_{p_{x}\to q}(x) = z$ are precisely the points in that connected component.

  The first part follows from~\cref{prop:criteria_rooted_subset} below, which
  proves it in more generality.

  For the converse, let $\varphi$ be an idempotent of $S$ whose only generator
  that is not fixed is $x\in S_{p_{x}}$. This means that there exists a
  generator $y\in S_{p_{y}}$, different from $x$, such that
  $\varphi_{p_{x}}(x) = S_{p_{y}\to p_{x}}(y)$. And clearly
  $\varphi_{p_{z}}(z) = z$ for any other generator $z\in S_{p_{z}}$. From the
  fact that $\varphi_{p_{x}}(x) = S_{p_{y}\to p_{x}}(y)$ we deduce that
  $f(y)\leq f(x)$, since $p_{y}\leq p_{x}$ in $P$. To see that the second
  condition holds, pick a $q\geq p_{x}$ and suppose that there exists a
  generator $w\in S_{p_{w}}$ such that $S_{p_{x}\to q}(x) = S_{p_{w}\to q}(w)$. This
  means that in the graph $\cG_{\varepsilon}(M_{\sigma})$ associated to $q$ both
  $x$ and $w$ are in the same connected component, and we claim that $y$ is
  also in this component. Indeed, by the definition of $\varphi$ we have
    $S_{p_{x}\to q}\circ\varphi_{p_{x}}(x) = S_{p_{y}\to q}(y) = S_{p_{w}\to q}(w)$.
\end{proof}

\begin{proposition}\label{prop:criteria_rooted_subset}
  Let $(M, d, f)$ be an augmented metric space. If for a set of points
  $A\subset M$ there exists a point $y\not\in A$ such that for every $x\in A$,
  $f(y)\leq f(x)$ and for each
  $\cG_{\varepsilon}(M_{\sigma})$ either:
  \begin{itemize}
    \item $x$ is path-connected to $y$, or
    \item the set of points that are path-connected to $x$ is contained in $A$,
  \end{itemize}
  then the set of generators $\Set{(p_{x}, x) \given x\in A}$ is a rooted subset
  in the density-Rips persistent set $S$ of $M$.
\end{proposition}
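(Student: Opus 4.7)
The plan is to construct an explicit idempotent endomorphism $\varphi$ of $S$ whose set of non-fixed generators is precisely $\Set{(p_{x}, x) \given x \in A}$. For each grade $q \in P$ with associated graph $\cG_{\varepsilon}(M_{\sigma})$, every element of $S_q$ is a connected component $C$, and I set $\varphi_q(C)$ to be the connected component of $y$ in $\cG_{\varepsilon}(M_{\sigma})$ whenever $C \subseteq A$, and $\varphi_q(C) = C$ otherwise. This is well-defined: $C \subseteq A$ together with $x \in C$ forces $f(y) \leq f(x) \leq \sigma$, so $y \in M_{\sigma}$ and has a component at this grade.

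The structural observation driving everything is a three-way dichotomy on components of $\cG_{\varepsilon}(M_{\sigma})$: either $C \subseteq A$, or $C$ is disjoint from $A$, or $y \in C$. Indeed, if $x \in C \cap A$ and $z \in C \setminus A$, then the component of $x$ contains the non-$A$ point $z$, so the second alternative in the hypothesis fails for $x$; hence the first alternative holds and $y$ is path-connected to $x$, giving $y \in C$.

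With this dichotomy, the three required properties follow. For \emph{idempotence}: if $C \subseteq A$, then $\varphi_q(C)$ is the component of $y$, which contains $y \notin A$ and is therefore not contained in $A$, so $\varphi_q$ fixes it; if $C \not\subseteq A$, $\varphi_q(C) = C$ is fixed by definition. For \emph{naturality} along $q \leq q'$, write $C' \coloneqq S_{q \to q'}(C)$. When $C \not\subseteq A$, some $z \in C \setminus A$ also sits in $C'$, so $C' \not\subseteq A$ and both compositions equal $C'$; when $C \subseteq A$ and $C' \subseteq A$, both compositions yield the $y$-component at $q'$. The only delicate subcase is $C \subseteq A$ yet $C' \not\subseteq A$, where $C' \cap A \supseteq C \neq \emptyset$ combined with the dichotomy forces $y \in C'$, so $C'$ itself is the $y$-component at $q'$ and the square still commutes. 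Finally, for the \emph{non-fixed generators}: at $p_x = (0, f(x))$ the graph $\cG_{0}(M_{f(x)})$ has only self-loops, so the component $\Set{x}$ is contained in $A$ precisely when $x \in A$; since $y \notin A$ implies $y \neq x$ for every $x \in A$, we have $\varphi_{p_{x}}(x) \neq x$ iff $x \in A$.

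The main obstacle I anticipate is the mixed naturality subcase above, where a component $C \subseteq A$ grows into a $C'$ that escapes $A$; the content of the hypothesis (``$y$-connected or stays in $A$'') is exactly what is needed to ensure that any such escape must absorb $y$, making the naturality square commute. All other subcases reduce to $\varphi_q$ or $\varphi_{q'}$ acting as the identity and amount to routine bookkeeping.
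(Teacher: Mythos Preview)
Your proof is correct and constructs the same idempotent endomorphism as the paper. The only difference is organizational: the paper defines $\varphi$ by its action on generators (sending each generator $x\in A$ to $S_{p_{y}\to p_{x}}(y)$ and fixing the rest) and then verifies that this prescription extends consistently to every element of every $S_{q}$, whereas you define $\varphi_{q}$ directly on connected components and then verify naturality; your three-way dichotomy on components is precisely the case analysis the paper carries out when checking consistency in the ``mixed'' case.
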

\begin{proof}
  To show that $A$ is a rooted subset, we need to define an appropriate
  idempotent $\varphi$ of $S$. Recalling that an endomorphism is uniquely
  determined by the image of its generators, we define $\varphi$ by setting
  \begin{equation}\label{eq:def_endomorphism}
    \varphi_{p_{x}}(x) =
    \begin{cases}
      S_{p_{y}\to p_{x}}(y), & \text{if $x\in A$},\\
      x, & \text{otherwise},
    \end{cases}
  \end{equation}
  for every generator $x\in S_{p_{x}}$. We need to show that $\varphi$ is indeed
  well-defined, which means that the image of $z\in S_{q}$,
  $\varphi_{q}(z) = S_{p_{x}\to q}\circ\varphi_{p_{x}}(x)$, is the same no
  matter the generator $x\in S_{p_{x}}$ we choose. Fix a $q\in P$ and a
  $z\in S_{q}$, and consider the set $G$ of generators whose image in $S_{q}$ is
  $z$, $G\coloneqq \Set{(p_{x}, x) \given p_{x}\leq q,\ S_{p_{x}\to q}(x) = z}$.

  Then, to check that $\varphi$ is well-defined, for every two
  $(p_{x}, x), (p_{w}, w)\in G$ it must hold that
  \begin{equation}\label{eq:well_defined_condition}
    S_{p_{x}\to q}\circ\varphi_{p_{x}}(x) = S_{p_{w}\to q}\circ\varphi_{p_{w}}(w).
  \end{equation}
  If both $(p_{x}, x)$ and $(p_{w}, w)$ are not in $A$, or if both $(p_{x}, x)$ and $(p_{w}, w)$
  are in $A$,
  then~\cref{eq:well_defined_condition} above trivially holds, by the way we
  have defined $\varphi$ in~\cref{eq:def_endomorphism}.

  Thus, the only interesting case is that only one of $(p_{x}, x)$ or $(p_{w}, w)$ is in $A$. Say that
  $(p_{x}, x)\in A$ and $(p_{w}, w)\not\in A$. Then, by assumption both $x$ and $w$ need to be
  path-connected to $y$ at the graph
  $\cG_{\varepsilon}(M_{\sigma})$ associated to $q$, which means that, as desired,
  \begin{equation*}
    S_{p_{x}\to q}\circ\varphi_{p_{x}}(x) = S_{p_{y}\to q}(y) = S_{p_{w}\to q}(w) = S_{p_{w}\to q}\circ\varphi_{p_{w}}(w).
  \end{equation*}

  Now, $\varphi$ is idempotent, because for every $x\in A$ we have
  $\varphi_{p_{x}}^{2}(x) = \varphi_{p_{x}}(S_{p_{y}\to p_{x}}(y)) = S_{p_{y}\to p_{x}}(\varphi_{p_{y}}(y)) = S_{p_{y}\to p_{x}}(y)$.
  And it is clear that the only generators that are not fixed by $\varphi$ are
  those in $A$. We conclude that, effectively, $A$ is a rooted subset.
\end{proof}

\subparagraph{Decomposition induced by rooted subsets.} As we have seen, rooted
subsets are related to the clustering behavior of the points. They are also
related to the decomposition of the linearized persistence module:
they induce summands.

\begin{theorem}\label{thm:decomp}
  Let $\varphi$ be an idempotent endomorphism of a persistent set $S$. Then the
  persistence module $\free S$ decomposes into
  \begin{equation*}
    \img(\Id_{\free S} - \free\varphi) \oplus \free(\img\varphi).
  \end{equation*}
\end{theorem}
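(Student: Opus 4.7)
The plan is to deduce the theorem from the three preparatory lemmas, using only that $\free$ is a functor that preserves the relevant factorizations.

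First, I would observe that since $\free$ is a functor, the idempotency of $\varphi$ immediately implies that $\free\varphi\colon\free S\to\free S$ is an idempotent endomorphism in $\cVect^{P}$. Applying \cref{lem:split_decomposition} to $\free\varphi$ therefore yields a decomposition
\begin{equation*}
  \free S \isomorphic \img(\Id_{\free S} - \free\varphi) \oplus \img(\free\varphi).
\end{equation*}
So the theorem reduces to establishing the isomorphism $\img(\free\varphi) \isomorphic \free(\img\varphi)$.

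For this isomorphism, I would apply \cref{lem:split_image} to $\varphi$ at the level of persistent sets, obtaining a factorization $\varphi\colon S\xrightarrow{\pi}\img\varphi\xrightarrow{\iota}S$ with $\pi\circ\iota = \Id_{\img\varphi}$. Pushing this through the linearization functor gives a factorization
\begin{equation*}
  \free\varphi\colon \free S \xrightarrow{\free\pi} \free(\img\varphi) \xrightarrow{\free\iota} \free S,
\end{equation*}
and since $\free$ preserves composition and identities, one has $\free\pi\circ\free\iota = \Id_{\free(\img\varphi)}$. Thus $\free\varphi$ is a split endomorphism with $\free(\img\varphi)$ as its middle object. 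On the other hand, \cref{lem:split_image} applied directly to $\free\varphi$ in $\cVect^{P}$ provides another splitting, this time through $\img(\free\varphi)$. Applying the uniqueness statement of \cref{lem:universal_split} to these two splittings of the same split endomorphism $\free\varphi$ yields the desired isomorphism $\free(\img\varphi) \isomorphic \img(\free\varphi)$.

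Combining the two displays gives the claimed decomposition. I do not anticipate a serious obstacle: the only substantive content is that $\free$, being a functor, transports the epi-split-mono factorization of $\varphi$ in $\cSet^{P}$ to a factorization of $\free\varphi$ in $\cVect^{P}$ with the splitting property preserved. The mild subtlety is that one should not try to argue by taking images pointwise and commuting them past the free vector space construction by hand; the cleaner route is to let \cref{lem:universal_split} do that work, since both $\free(\img\varphi)$ and $\img(\free\varphi)$ arise as middles of splittings of the same idempotent.
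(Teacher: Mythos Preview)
Your proposal is correct and follows essentially the same argument as the paper: apply \cref{lem:split_decomposition} to $\free\varphi$, then identify $\img(\free\varphi)\isomorphic\free(\img\varphi)$ by exhibiting two splittings of $\free\varphi$ (one via \cref{lem:split_image} applied to $\free\varphi$, the other by transporting the set-level splitting of $\varphi$ through $\free$) and invoking \cref{lem:universal_split}. The only difference is the order in which you invoke the two applications of \cref{lem:split_image}, which is immaterial.
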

\begin{proof}
  Since $\varphi$ is idempotent, $\free\varphi$ is idempotent.
  By~\cref{lem:split_decomposition}, this induces a decomposition
  \begin{equation*}
    \free S\isomorphic\img(\Id - \free\varphi)\oplus\img\free\varphi.
  \end{equation*}
  It is left to show that $\img\free\varphi\isomorphic\free(\img\varphi)$.
  Applying~\cref{lem:split_image} to $\free\varphi$, we have a factorization
  $\free\varphi\colon\free S\xrightarrow{\pi} \img\free\varphi \xrightarrow{\iota} \free S$
  with $\pi\circ\iota = \Id$. Applying~\cref{lem:split_image} again, this time
  to $\varphi$, we have a factorization
  $\varphi\colon S\xrightarrow{\pi'} \img\varphi \xrightarrow{\iota'} S$ with
  $\pi' \circ \iota' = \Id$. Now, split endomorphisms are preserved by every
  functor: in the diagram
  $\free S \xrightarrow{\free\pi'} \free(\img\varphi) \xrightarrow{\free\iota'} \free S$
  it holds $\free\iota'\circ\free\pi' = \free\varphi$ and
  $\free\pi'\circ\free\iota' = \Id$. Thus, the endomorphism $\free\varphi$ splits in
  two ways:
  \begin{equation*}
    \begin{tikzcd}
      & \img\free\varphi \ar{dd}{\isomorphic} \ar{dr}{\iota} & \\
      \free S \ar{ur}{\pi}\ar{dr}{\free\pi'} & & \free S \\
      & \free(\img\varphi) \ar{ur}{\free\iota'}, &
    \end{tikzcd}
  \end{equation*}
  where the middle arrow exists and is an isomorphism
  by~\cref{lem:universal_split}, finishing the proof.
\end{proof}
Combining the above theorem with the Krull-Schmidt theorem, we obtain the following:
\begin{corollary}\label{thm:main}
  A rooted subset of a persistent set $S$ induces a summand in the
  decomposition of $\free S$.
  A rooted generator $x\in S_{p_{x}}$ induces an interval summand, and all other
  summands can be obtained by decomposing $\free(\img \varphi)$, where $\varphi$
  is the endomorphism associated to $x$.
\end{corollary}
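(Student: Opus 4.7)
My plan is to invoke \cref{thm:decomp} for the first claim, then analyze the shape of $\img(\Id_{\free S}-\free\varphi)$ in the rooted generator case to recognize it as an interval module, and finally use the Krull-Schmidt theorem for the third claim. For the first statement, let $\varphi$ be an idempotent endomorphism of $S$ witnessing the given rooted subset; \cref{thm:decomp} directly yields $\free S \isomorphic \img(\Id_{\free S}-\free\varphi)\oplus \free(\img\varphi)$, exhibiting $\free(\img\varphi)$ as a summand.

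For the second claim, suppose the rooted subset is $\{x\}$ with $x\in S_{p_x}$, and let $\varphi$ be the associated idempotent, with $\varphi_{p_x}(x) = S_{p_y\to p_x}(y)$ for some generator $y$. I would first observe that at each grade $q$, an element of $S_q$ is fixed by $\varphi_q$ whenever it lies in the image of some generator $z\neq x$; by well-definedness of $\varphi$ on each fiber, the only candidate non-fixed element is $S_{p_x\to q}(x)$, and only when no generator other than $x$ shares this image. Define
\begin{equation*}
  I \coloneqq \{q\in P : q\geq p_x \text{ and no generator } z\neq x \text{ satisfies } S_{p_z\to q}(z)=S_{p_x\to q}(x)\}.
\end{equation*}
A direct calculation then shows that $(\img(\Id_{\free S}-\free\varphi))_q$ is one-dimensional for $q\in I$, spanned by $S_{p_x\to q}(x)-S_{p_y\to q}(y)$, and is zero otherwise; the internal maps restrict to identities between these one-dimensional spaces, so $\img(\Id_{\free S}-\free\varphi)\isomorphic \cI(I)$.

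It remains to check that $I$ is a valid interval and then to derive the third claim. Non-emptiness holds because $p_x\in I$: if some $z\neq x$ had $S_{p_z\to p_x}(z)=x$, this would contradict $x$ being a generator at $p_x$. For convexity, suppose $q_1\leq l\leq q_2$ with $q_1,q_2\in I$; any merger of $x$ with a $z\neq x$ at $l$ would propagate to $q_2$ via $S_{l\to q_2}$, contradicting $q_2\in I$. Connectedness is immediate since $p_x$ is comparable to every element of $I$. Finally, the third claim follows from Krull-Schmidt applied to $\free S\isomorphic \cI(I)\oplus \free(\img\varphi)$: extending this to a full indecomposable decomposition by decomposing $\free(\img\varphi)$ and invoking uniqueness shows that every indecomposable summand of $\free S$ other than $\cI(I)$ arises in this way.

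The main technical subtlety I anticipate is the bookkeeping at each fiber $S_q$: when $x$ shares its equivalence class with some $z\neq x$ at $q$, the naturality of $\varphi$ forces $y$ into that same class, which is what simultaneously makes $(\Id_{\free S}-\free\varphi)_q$ vanish on this fiber and drives the convexity of $I$.
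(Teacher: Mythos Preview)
Your argument is correct and follows the same route as the paper: invoke \cref{thm:decomp} and the Krull--Schmidt theorem. The paper's own proof is literally the single sentence preceding the corollary, so you have actually filled in the nontrivial part that the paper leaves implicit, namely the verification that $\img(\Id_{\free S}-\free\varphi)$ is an interval module when the rooted subset is a singleton. Your description of $I$, the dimension count at each grade, and the check that $I$ is nonempty, convex, and connected (via the common lower bound $p_x$) are all correct.

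One small inconsistency worth cleaning up: in your first paragraph you name $\free(\img\varphi)$ as ``the summand induced by the rooted subset,'' whereas in your second paragraph you (correctly) identify the interval induced by a rooted generator as $\img(\Id_{\free S}-\free\varphi)$. Since $\varphi$ fixes every generator outside the rooted subset $A$ and moves exactly those in $A$, the summand naturally associated to $A$ is $\img(\Id_{\free S}-\free\varphi)$, with $\free(\img\varphi)$ the complementary piece; aligning the first paragraph with this reading would match both your own second paragraph and the paper's ``peeling off'' language.
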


This allows to iteratively \textit{peel off} intervals of a persistence module
of the form $\free S$: find a rooted generator of $S$, with associated
idempotent $\varphi$, and continue considering $\img \varphi$ instead of $S$.
In the setting of an augmented metric space $(M, d, f)$ and its
density-Rips persistent set, the intervals that are peeled off are easily
interpretable through the clustering behavior of the points $M$,
by~\cref{prop:criteria_rooted_generator}. Moreover, the conditions we
describe actually happen in practice, as we see
in~\cref{sec:lower_bound}.

\subparagraph{Neighborly rooted points.} In fact, certain points of an augmented
metric space $(M, d, f)$ can be seen to be rooted by looking at the nearest
neighbors, which will be useful in~\cref{sec:lower_bound}. In what follows we
fix a total order on $M$ compatible with the order induced by $f$. Recall that the
\deff{nearest neighbor} of $x$ is the element $x'\neq x$ of minimum distance to
$x$, where ties have been broken by the fixed total order on $M$.

\begin{definition}\label{def:neighborly_rooted}
  Let $(M, d, f)$ be an augmented metric space. An element $x$ is
  \deff{neighborly rooted} if its nearest neighbor $y\in M$ satisfies
  $f(y)\leq f(x)$.
\end{definition}

\begin{lemma}\label{lem:neighborly_rooted}
  With the notation as above, if a point $x\in M$ is neighborly rooted then $x$ is a rooted
  generator in the density-Rips persistent set of $(M, d, f)$.
\end{lemma}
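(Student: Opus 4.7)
The plan is to apply \cref{prop:criteria_rooted_generator} with the witness point $y$ being the nearest neighbor of $x$. The neighborly rooted hypothesis directly gives condition~(i) of that proposition, namely $f(y)\leq f(x)$, so the entire task reduces to verifying the connectedness condition~(ii): whenever $x$ lies in a cluster of size larger than one in some $\cG_{\varepsilon}(M_{\sigma})$, then $x$ and $y$ are path-connected in that graph.

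For this, I would fix a grade $(\varepsilon,\sigma)$ at which $x$ is path-connected to some other point $z\in M_{\sigma}$. Since this witnesses $x\in M_{\sigma}$, we get $f(x)\leq\sigma$, and combined with the neighborly rooted hypothesis $f(y)\leq f(x)$ this yields $y\in M_{\sigma}$. Now I would use the existence of the path from $x$ to $z$ in $\cG_{\varepsilon}(M_{\sigma})$: its first edge produces some vertex $w\neq x$ with $d(x,w)\leq\varepsilon$. Because $y$ is the nearest neighbor of $x$ in all of $M$, we have $d(x,y)\leq d(x,w)\leq\varepsilon$. Hence $(x,y)$ is itself an edge of $\cG_{\varepsilon}(M_{\sigma})$, so $x$ and $y$ are path-connected there.

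Once both conditions of~\cref{prop:criteria_rooted_generator} are verified, that proposition immediately concludes that $x$ is a rooted generator of $S$. The only mild subtlety, and the one step I would be careful about, is the case distinction around whether $x$ actually appears as a vertex in the graph $\cG_{\varepsilon}(M_{\sigma})$ in question; but under the hypothesis of~(ii) (that $x$ is path-connected to some other point), $x$ is a vertex, and so $f(x)\leq\sigma$ is automatic, which is what lets the nearest neighbor $y$ also sit inside $M_{\sigma}$. No further machinery, and no calculation beyond these inequalities, is required.
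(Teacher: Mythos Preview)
Your proposal is correct and follows exactly the paper's approach: the paper's proof is the single sentence ``It is clear that the nearest neighbor of $x$ satisfies the conditions of~\cref{prop:criteria_rooted_generator},'' and you have simply spelled out why both conditions hold. The first-edge argument you give for condition~(ii) is precisely the verification the paper leaves implicit.
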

\begin{proof}
  It is clear that the nearest neighbor of $x$ satisfies the conditions
  of~\cref{prop:criteria_rooted_generator}.
\end{proof}

\begin{remark}
  We can identify all neighborly rooted points in the time it takes to solve the
  all-nearest-neighbor problem.
  Naturally, the all-nearest-neighbor problem can be solved in $O(n^{2})$, where
  $n$ is the number of points, by checking all possible pairs. When the points
  are in Euclidean space, the running time can be improved to $O(n\log n)$ time~\cite{clarksonFastAlgorithmsAll1983,vaidyaAnOLognAlgorithm1989}.
\end{remark}

\subparagraph{Two notable intervals in the decomposition.} The concept of rooted
generators allows us to prove that, in certain cases, we can find at least two
intervals in the decomposition of $\free S$, as in~\cref{thm:at_least_two}
below. We first prove~\cref{thm:bottom_interval}, which has already appeared in
~\cite[Theorem 5.3]{brodzkiComplexityZerodimensionalMultiparameter2020}, where
the proof method is to directly construct an endomorphism of the
persistence module, as we also do after composing with the
linearization functor.

\begin{theorem}\label{thm:bottom_interval}
  Let $S$ be a persistent set. Suppose that the preordered set of generators of
  $S$ has a bottom $\bot$ (that is, one has $\bot\leq x$ for any other generator
  $x$). Then the decomposition of $\free S$ consists of at least one interval,
  induced by $\bot$.
\end{theorem}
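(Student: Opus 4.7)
The plan is to exhibit an explicit idempotent endomorphism $\varphi$ of $S$ whose image, under the linearization functor, is an interval module, and then apply \cref{thm:decomp} to peel off an interval summand.

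First I would pin down the support of $S$. Since $P$ is finite, every element of a non-empty $S_q$ is the image under some $S_{p_y \to q}$ of a generator $y$; by hypothesis $\bot \leq y$ in the preorder of generators, so $p_\bot \leq p_y \leq q$. Conversely, if $q \geq p_\bot$ then $S_{p_\bot \to q}(\bot) \in S_q$. Hence the support of $S$ is exactly the up-set $I \coloneqq \Set{q \in P \given q \geq p_\bot}$, which is non-empty, connected through $p_\bot$, and convex in $P$, so that $\cI(I)$ is a well-defined interval module.

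Next I would define $\varphi_q(z) \coloneqq S_{p_\bot \to q}(\bot)$ for every $q \in I$ and $z \in S_q$, and take $\varphi_q$ trivial outside $I$. Naturality reduces to the functoriality identity $S_{q \to q'} \circ S_{p_\bot \to q} = S_{p_\bot \to q'}$, and idempotency is immediate since each $\varphi_q$ is constant and its value is a fixed point of $\varphi_q$. The image $\img \varphi$ then has a single element at every $q \in I$ and is empty elsewhere, with transition maps forced to be the identities between singletons; applying $\free$ therefore yields $\free(\img \varphi) \isomorphic \cI(I)$.

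Finally, \cref{thm:decomp} produces the decomposition
\[
  \free S \isomorphic \img(\Id_{\free S} - \free \varphi) \oplus \cI(I),
\]
which exhibits $\cI(I)$ as an interval summand of $\free S$, as claimed. The only step demanding real attention is the support identification in the first paragraph; once that is in place, the construction of $\varphi$ and the identification of its image are essentially forced.
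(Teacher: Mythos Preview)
Your proof is correct and follows essentially the same route as the paper: both construct the constant-to-$\bot$ idempotent $\varphi$ and observe that $\free(\img\varphi)$ is an interval module, then invoke \cref{thm:decomp}. You are simply more explicit than the paper in identifying the support of $S$ as the up-set of $p_\bot$ and verifying that it is an interval, whereas the paper defines $\varphi$ on generators and leaves the interval identification implicit.
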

\begin{proof}
  Let $\bot\in S_{p_{\bot}}$ be a bottom and let $x\in S_{p_{x}}$ be a generator
  of $S$. Since $\bot\in S_{p_{\bot}}$ is a bottom, we have
  $p_{\bot}\leq p_{x}$. We can define an idempotent $\varphi\colon S\to S$
  by $\varphi_{p_{x}}(x) = S_{p_{\bot}\to p_{x}}(\bot)$ for every generator
  $x\in S_{p_{x}}$ of $S$. This endomorphism is well-defined and its image has
  only one generator, namely $\bot$, and thus $\free(\img\varphi)$ is isomorphic
  to an interval module.
\end{proof}

\begin{theorem}\label{thm:at_least_two}
  Let $(M, d, f)$ be an augmented metric space, and let $S\colon P\to \cSet$ be
  its density-Rips persistent set, as in~\cref{def:density_rips}. If
  $\abs{M} \geq 2$ then the decomposition of $\free S$ into indecomposable
  summands consists of at least two intervals.
\end{theorem}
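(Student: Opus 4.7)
The plan is to produce two distinct interval summands by combining the two devices already developed: a rooted generator near the ``top'' of the poset, via \cref{lem:neighborly_rooted} and \cref{thm:main}, and the bottom of the generator preorder, via \cref{thm:bottom_interval}. The corollary \cref{thm:main} peels off a first interval and leaves a remainder of the form $\free T$ for a persistent set $T$; \cref{thm:bottom_interval} then provides a second interval inside that remainder.

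Concretely, I fix a total order on $M$ compatible with $f$ (and with the tie-breaking used to define nearest neighbors). Let $y_0 \in M$ be the $f$-minimum element, choosing the total-order smallest in case of ties; its generator grade is minimal among all generator grades of $S$, so $y_0$ is a bottom of the preordered set of generators. Next, let $(x_1, x_2)$ realize $\min_{a\ne b} d(a,b)$, labelled so that $f(x_1) \leq f(x_2)$ with $x_1$ the total-order smaller element in case of equality. Then $x_2$ is neighborly rooted by \cref{lem:neighborly_rooted}, since its nearest neighbor $x_1$ satisfies $f(x_1) \leq f(x_2)$. A short case analysis forces $x_2 \neq y_0$: if $f(x_1) < f(x_2)$ this is immediate from $f(y_0) \leq f(x_1) < f(x_2)$; if $f(x_1) = f(x_2)$, then $x_1$ precedes $x_2$ in the total order, so $x_2$ cannot be the total-order smallest minimum of $f$.

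Invoking \cref{thm:main} on the rooted generator $x_2$ gives a decomposition $\free S \cong J \oplus \free(\img \varphi)$, where $J$ is an interval summand induced by $x_2$ and $\varphi$ is the idempotent of $S$ associated to $x_2$. Inspecting the construction, the generators of $\img \varphi$ are exactly the generators of $S$ that are fixed by $\varphi$, sitting at their original grades, namely the points of $M \setminus \{x_2\}$. Since $y_0 \neq x_2$, $y_0$ remains a generator of $\img \varphi$ and still has a grade below every other generator there, so it is a bottom. Applying \cref{thm:bottom_interval} to $\img \varphi$ then produces a second interval summand inside $\free(\img \varphi)$, and the Krull-Schmidt theorem assembles these into at least two intervals in the decomposition of $\free S$.

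The step I expect to be most delicate is the identification of the generators of $\img \varphi$: one must verify that peeling off the single rooted generator $x_2$ leaves a persistent set whose generators coincide with the fixed generators of $\varphi$, at their original grades, so that $y_0$ genuinely survives as a bottom. This is a short functorial chase that uses crucially that $x_2$ is the only generator moved by $\varphi$, but it has to be done with care. Everything else---the existence of a minimum-distance pair, the bottom property of $y_0$ in $S$, and the combination via Krull-Schmidt---is essentially bookkeeping.
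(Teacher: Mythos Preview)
Your proof is correct and uses the same two-ingredient strategy as the paper: one interval from a rooted generator, one from the bottom via \cref{thm:bottom_interval}. The paper streamlines both steps. For the rooted generator it simply takes an $f$-maximal point $\top$: every other point, in particular the nearest neighbor of $\top$, then has $f$-value at most $f(\top)$, so $\top$ is immediately rooted and automatically distinct from the $f$-minimum $\bot$---no closest-pair detour and no case analysis $x_2\neq y_0$ needed. For the second interval the paper applies \cref{thm:bottom_interval} directly to $S$ rather than to $\img\varphi$, bypassing the generator bookkeeping you correctly flag as the delicate step; the two resulting intervals have different supports (the $\bot$-interval covers the full support of $S$, while the $\top$-interval dies once $\top$ merges with its neighbor), so Krull--Schmidt already separates them. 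Your sequential peel-off makes the distinctness explicit without appealing to supports, at the price of that extra verification about the generators of $\img\varphi$.
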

\begin{proof}
  Consider a point $\top\in M$ of maximal function value, that is,
  $f(\top)\geq f(x)$ for any other $x\in M$.  Let
  $y$ be the nearest neighbor of $\top$. Since $M_{f(\top)} = M_{\sigma}$ for
  any $\sigma \geq f(\top)$, it is clear that $\top$ and its nearest
  neighbor $y$ satisfy the conditions of~\cref{prop:criteria_rooted_generator},
  and thus $\top$ is a rooted generator, yielding the first interval. For the
  second interval, we note that there is at least one point $\bot\in M$ of
  minimal density value and apply~\cref{thm:bottom_interval}.
\end{proof}

\newcommand{\justone}{\begin{array}{cc} x_0 & \\ & \end{array}}
  \newcommand{\allpoints}{\begin{array}{c|c} x_{0} & x_{1}\\ \hline x_{3} & x_{2} \end{array}}
  \newcommand{\allpointsvariant}{\begin{array}{cc}\multicolumn{1}{c|}{x_{0}} & x_{1}\\ \hline x_{3} & x_{2} \end{array}}
  \newcommand{\allpointsbutzero}{\begin{array}{cc}\multicolumn{1}{c|}{x_{0}} & x_{1}\\ \cline{1-1} x_{3} & x_{2} \end{array}}
  \newcommand{\twovertical}{\begin{array}{c|c} x_{0} & x_{1}\\ x_{3} & x_{2} \end{array}}
  \newcommand{\twohorizontal}{\begin{array}{cc} x_{0} & x_{1}\\ \hline x_{3} & x_{2} \end{array}}
  \newcommand{\onlytwo}{\begin{array}{cc} x_0 & x_1
                          \\ \hline \multicolumn{1}{c|}{x_3} & x_2\end{array}}
  \newcommand{\alltogether}{\begin{array}{cc} x_{0} & x_{1}\\ x_{3} & x_{2} \end{array}}
  \newcommand{\firstgraph}{\begin{array}{c|c} x_0 & x_1 \\ & \end{array}}
  \newcommand{\firstgraphsep}{\begin{array}{cc} x_0 & x_1 \\ & \end{array}}
  \newcommand{\threefirst}{\begin{array}{c|c} x_0 & x_1 \\ \cline{2-2} & x_2\end{array}}
  \newcommand{\threesecond}{\begin{array}{c|c} x_0 & x_1 \\ & x_2\end{array}}
  \newcommand{\threethird}{\begin{array}{cc} x_0 & x_1 \\ & x_2\end{array}}
  \newcommand{\threefirstvariant}{\begin{array}{cc} \multicolumn{1}{c|}{x_0} & x_1 \\ \cline{2-2} & x_2\end{array}}
  \newcommand{\graphg}[1]{\includegraphics[page=#1]{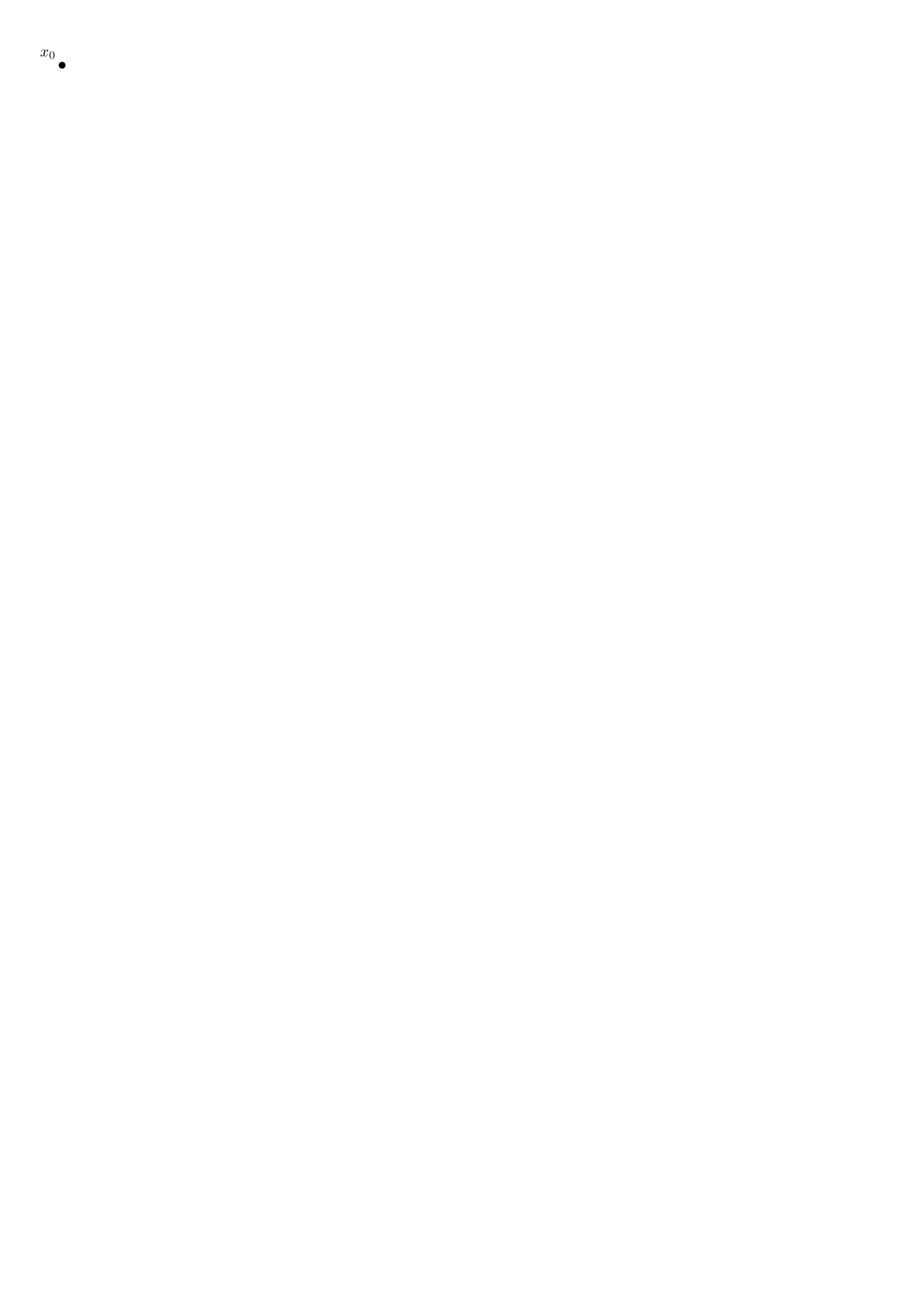}}

\begin{example}\label{ex:not_all_peeled}
  Not every summand of an indecomposable decomposition can be obtained by taking
  rooted subsets and applying~\cref{thm:main}. As an example, consider the
  augmented metric space given by six points $\Set{x_{0}, \dots, x_{5}}$ in the
  plane as in~\cref{fig:not_all_peeled_metric}. Note that $x_{4}$ and $x_{5}$
  are rooted in the associated density-Rips persistent set, and that they can be peeled off. After peeling, we obtain a
  persistent set $S\colon P\to\cSet$ with
  $P \coloneqq \Set{0, 2, 3, 4}\times\Set{0, 1, 2, 3, 4, 5}\subset \bR^{2}$,
  which we
  describe in~\cref{fig:not_all_peeled}. This example is an adaptation
  of~\cite[Example 4.12]{caiElderRuleStaircodesAugmentedMetric2021}, which is
  introduced in the context of \textit{conquerors} that we discuss
  in~\cref{sec:elder_rule}.

  \begin{figure}[h]
    \centering \includegraphics{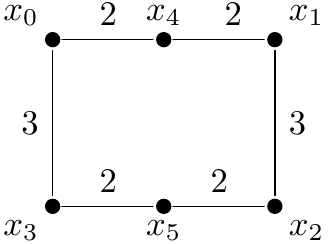}
    \caption{\label{fig:not_all_peeled_metric} The augmented metric space
      $(M, d, f)$ of~\cref{ex:not_all_peeled}, with $f(x_{i}) = i$. These are
      six points $\Set{x_{0}, \dots, x_{5}}$ in the plane, where the distances are given
      by the numbers next to each line.}
    \end{figure}

  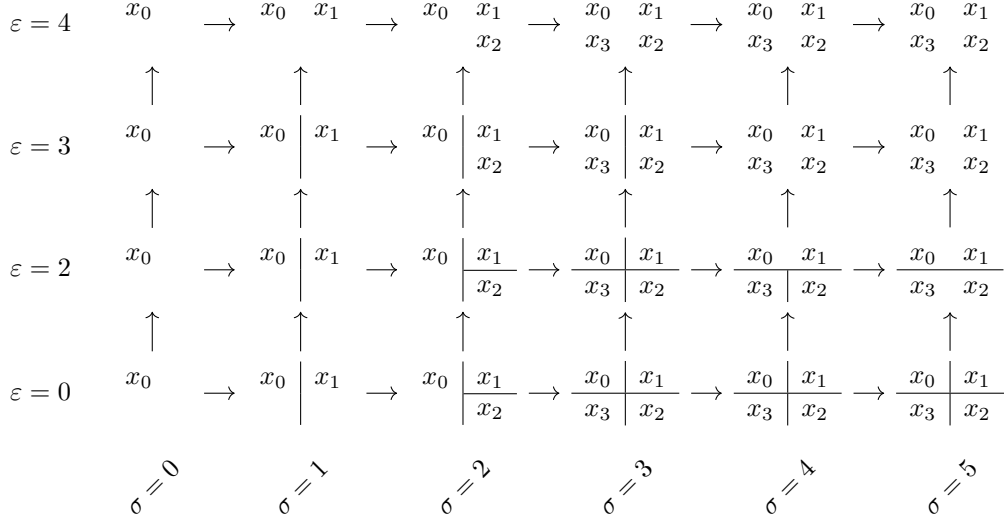
\begin{figure}
    \centering
    \begin{equation*}
      \begin{tikzcd}[ampersand replacement=\&,row sep=15, column sep=12pt]
        \varepsilon  = 4 \&[-5pt] \justone \rar      \& \firstgraphsep \rar             \& \threethird \rar{} \& \alltogether \rar                 \& \alltogether \rar \& \alltogether \\
        \varepsilon  = 3 \& \justone \rar\uar \& \firstgraph \rar\uar \& \threesecond \rar\uar    \& \twovertical \rar\uar             \& \alltogether \rar\uar \& \alltogether \uar \\
        \varepsilon  = 2 \& \justone \rar\uar        \& \firstgraph \rar\uar        \& \threefirst \rar\uar  \& \allpoints \rar\uar \& \onlytwo \rar\uar \& \twohorizontal \uar \\
        \varepsilon  = 0 \& \justone \rar\uar               \& \firstgraph \rar\uar               \& \threefirst \rar\uar         \& \allpoints  \rar\uar              \& \allpoints \rar\uar \& \allpoints \uar \\[-10pt]
        \& \mysigma{0} \& \mysigma{1} \& \mysigma{2} \& \mysigma{3} \& \mysigma{4} \& \mysigma{5}   \&
      \end{tikzcd}
    \end{equation*}
    \caption{\label{fig:not_all_peeled} The persistent set $S\colon P\to\cSet$
      of~\cref{ex:not_all_peeled} obtained by taking the density-Rips
      persistence set of~\cref{fig:not_all_peeled_metric} and removing $x_{4}$
      and $x_{5}$. Each node in the grid represents a partition of the $x_{i}$,
      where $x_{i}$ and $x_{j}$ are in the same partition if they are not
      separated by a line. The arrows are the functions that send the partition
      of $x_{i}$ in one node to the partition of $x_{i}$ in the other.}
  \end{figure}

  We claim that the persistence module $\free S\colon P\to\cVect$ decomposes
  into four summands, all of them interval modules. We denote these summands by
  $I_{0}, I_{1}, I_{2}$ and $I_{3}$, where each $I_{i}$ is associated to the
  generator $(p_{i}, x_{i})$ of $S$, where $p_{i} = (0, i)\in P$. For each
  $i = 0,\dots, 3$, we set $(I_{i})_{p} = 0$ for any $p < p_{i}$ and
  $(I_{i})_{p_{i}} = K$, and we define
  $\iota_{i}\colon I_{i}\to\free S$ by
    \begin{align*}
      (\iota_{0})_{p_{0}} (1) &= [x_{0}], & (\iota_{1})_{p_{1}} (1) &= [x_{1}] - [x_{0}],\\
      (\iota_{2})_{p_{2}} (1) &= [x_{2}] - [x_{1}], & (\iota_{3})_{p_{3}} (1) &= [x_{3}] - [x_{0}] + [x_{1}] - [x_{2}].
    \end{align*}
    The support of each $I_{i}$ are the grades $p\geq p_{i}$ such that
    $((\free S)_{p_{i}\to p}\circ (\iota_{i})_{p_{i}})(1)$ is not zero.
    It can be seen that these maps induce a decomposition
    $\free S \isomorphic I_{0}\oplus I_{1}\oplus I_{2}\oplus I_{3}$.

  However, no subset of the generators other than $\Set{x_{1}, x_{2}, x_{3}}$ is
  rooted because each of the connected
  components given by $\Set{x_{1}, x_{0}}$, $\Set{x_{1}, x_{2}}$,
  $\Set{x_{2}, x_{3}}$, and $\Set{x_{0}, x_{3}}$ appear in $S$.
\end{example}

\section{Rooted generators as a generalization of the elder rule}\label{sec:elder_rule}

\subparagraph{Single-parameter case.} We now suppose that the poset $P$ is a
finite totally ordered poset. In this setting, the theory of rooted generators
allows us to recover the \textit{elder
  rule}~\cite{edelsbrunnerComputationalTopologyIntroduction2010} (see
also~\cite{curryFiberPersistenceMap2018}
and~\cite{caiElderRuleStaircodesAugmentedMetric2021}).

\begin{proposition}
  Let $P$ be a finite totally ordered poset and let $S\colon P\to\cSet$ be a
  persistent set. Suppose that $S$ has at least two generators and that
  $S_{\top}$ is a singleton, where $\top$ is the maximum element of $P$. Then
  every maximal generator (in the preorder of~\cref{def:generator}) is rooted.
\end{proposition}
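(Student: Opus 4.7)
The plan is to realize any maximal generator $x \in S_{p_x}$ as a rooted generator by building an explicit idempotent $\varphi\colon S \to S$ that moves only $x$. First I would record two easy consequences of the hypotheses: (a) since $x$ is maximal and there are at least two generators, $|S_{p_x}| \geq 2$ --- otherwise any other generator $y$ would satisfy $p_y \leq p_x$ by maximality, and either $y = x$ (if $p_y = p_x$) or $S_{p_y \to p_x}(y) = x$ (if $p_y < p_x$), the latter contradicting the defining property of the generator $x$; and (b) since $x$ is maximal, no generator has grade $> p_x$, so each $S_{p_x \to q}$ for $q \geq p_x$ is surjective onto $S_q$, and combining with $S_\top$ being a singleton yields that all of $S_{p_x}$ eventually collapses to a single element of $S_\top$.

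Next I would introduce the ``fiber chain'' $[x]_q := S_{p_x \to q}^{-1}(S_{p_x \to q}(x)) \subseteq S_{p_x}$ for $q \geq p_x$. By the totally ordered structure of $P$ and (b) above, the family $\{[x]_q\}_q$ is monotone nondecreasing, starts as $\{x\}$ at $q = p_x$, and equals $S_{p_x}$ at $q = \top$. By (a) there is a smallest grade $q^* \geq p_x$ at which $[x]_{q^*} \supsetneq \{x\}$. Choosing any $u \in [x]_{q^*} \setminus \{x\}$, I would define $\varphi$ by setting $\varphi_p = \mathrm{id}_{S_p}$ for $p < p_x$, setting $\varphi_{p_x}(x) = u$ and $\varphi_{p_x}(v) = v$ for $v \in S_{p_x} \setminus \{x\}$, and extending to $q > p_x$ via $\varphi_q(S_{p_x \to q}(v)) := S_{p_x \to q}(\varphi_{p_x}(v))$, which is legal precisely because $S_{p_x \to q}$ is surjective.

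The step I expect to carry the real weight is checking well-definedness of $\varphi_q$ for $q > p_x$: if $v_1, v_2 \in S_{p_x}$ have $S_{p_x \to q}(v_1) = S_{p_x \to q}(v_2)$, the only nontrivial case is $v_1 = x$, $v_2 \neq x$, forcing $v_2 \in [x]_q$ and hence $q \geq q^*$; then $u \in [x]_{q^*} \subseteq [x]_q$ yields $S_{p_x \to q}(u) = S_{p_x \to q}(v_2)$, as needed. Naturality across $p < p_x \leq q$ is automatic because any $S_{p \to p_x}(w)$ is in the image from below $p_x$ and hence is not the generator $x$, so $\varphi_{p_x}$ fixes it. Idempotence follows because $u \neq x$ implies $\varphi_{p_x}(u) = u$, and every generator other than $x$ is fixed by construction, so $\{x\}$ is a rooted subset and $x$ is a rooted generator. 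The main obstacle is really just identifying the correct target $u$; the monotone chain $\{[x]_q\}_q$ together with the two global hypotheses (singleton $S_\top$ and at least two generators) is exactly what forces this choice to exist and to make the extension well defined.
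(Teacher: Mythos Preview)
Your proof is correct and follows essentially the same approach as the paper: both arguments locate the first grade $q^*$ (the paper's $\alpha$) at which $x$ merges with something else, pick a witness there, and send $x$ to that witness while fixing everything else. The only cosmetic difference is that you phrase things via the fiber chain $[x]_q\subseteq S_{p_x}$ and define $\varphi$ on all of $S_{p_x}$ before extending by surjectivity, whereas the paper works directly at the level of generators; your element $u$ is exactly the paper's $S_{p_y\to p_x}(y)$.
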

\begin{proof}
  Let $x\in S_{p_{x}}$ be a maximal generator, and define
  \begin{equation*}
    I_{x} \coloneqq \Set{q\in P \given
      \text{$q\geq p_{x}$ and, for any other generator $w\in S_{p_{w}}$, $S_{p_{w}\to q}(w) \neq S_{p_{x}\to q}(x)$}}.
  \end{equation*}
  Since $p_{x}\in I_{x}$, $I_{x}$ is not empty, and we can consider the set
  $U\subset P$ of upper bounds of $I_{x}$. Moreover, since
  $S_{\top} = \singleton$ and there are at least two generators by assumption,
  the set $U\setminus I_{x}$ is not empty. Let $\alpha$ be the least element in
  $U\setminus I_{x}$. By construction of $I_{x}$ and $U\setminus I_{x}$, there
  is a generator $y\in S_{p_{y}}$ such that
  $S_{p_{y}\to \alpha}(y) = S_{p_{x}\to \alpha}(x)$. Now, since $x$ is maximal,
  it holds that $p_{y}\leq p_{x}$, and we can define an idempotent
  $\varphi\colon S\to S$ by $\varphi_{p_{x}}(x) = S_{p_{y}\to p_{x}}(y)$, and
  $\varphi_{p_{z}}(z) = z$ for any other generator $z\in S_{p_{z}}$. Such an
  idempotent is well-defined by the way we have defined $\alpha$: if there is
  any other generator $w\in S_{p_{w}}$ such that
  $S_{p_{w}\to q}(w) = S_{p_{x}\to q}(x)$ then $\alpha\leq q$ and also
  $S_{p_{y}\to q}(y) = S_{p_{x}\to q}(x)$. We conclude that $x$ is rooted, as
  desired.
\end{proof}

Thus, when $P$ is a total order, we can decompose any persistence module
$\free S$ by peeling off rooted generators, following~\cref{thm:decomp} and
by iteratively considering maximal generators.

\subparagraph{Relation to constant conquerors.} Let $(M, d, f)$ be an augmented
metric space. Cai, Kim, Mémoli and
Wang~\cite{caiElderRuleStaircodesAugmentedMetric2021} define the concept of a
constant conqueror as follows. First, define an ultrametric on $M$: $u(x, x')\coloneqq \min\Set{\varepsilon\in[0, \infty) \given \text{$x$ and $x'$ are path-connected in $\cG_{\varepsilon}(M)$}}$.

Now fix a total order $\prec$ on $M$ and let $x\in M$ be a non-minimal element with
respect to this order. A \deff{conqueror} of $x$ in $M$ is another point $x'\in M$
such that (1) $x' \prec x$, and (2) for any $x''$ with $x'' \prec x$ one has $u(x, x')\leq u(x, x'')$.
Given a function $f\colon M\to\bR$, a \deff{conqueror function} of a non-minimal
$x\in M$, with respect to $\prec$, is a function
$c_{x}\colon [f(x), \infty)\to M$ that sends each $\sigma$ to a
conqueror of $x$ in $M_{\sigma}$. For the minimal element $\bot$ of $M$ we
define $c_{\bot}\colon [f(\bot), \infty)\to M$ to be the constant function at $\bot$.

Also, in the same paper~\cite{caiElderRuleStaircodesAugmentedMetric2021}, given
a point $x\in M$, and assuming that $f\colon M\to\bR$ is injective, the authors
define the \deff{staircode} of $x$ as the set given by
\begin{equation*}
  I_{x} \coloneqq \Set{(\varepsilon, \sigma)\in\realindexing\given x\in M_{\sigma} \text{ and $x$ is the oldest in $[x]_{(\varepsilon,\sigma)}$}},
\end{equation*}
where $[x]_{(\varepsilon, \sigma)}$ is the set of points that are
path-connected to $x$ in $\cG_{\varepsilon}(M_{\sigma})$ and being the
``oldest'' means $f(x) < f(x')$ for any other
$x'\in [x]_{(\varepsilon, \sigma)}$. The authors also define an analogous notion
when $f$ is not injective, which we do not reproduce here.

Finally, the authors ask the following question:
\begin{question}
  Let $(M, d, f)$ be an augmented metric space\@. If $x\in M$ has a constant
  conqueror function, is the interval module supported by $I_{x}$ a summand of
  its density-Rips persistence module?
\end{question}

If we replace constant conqueror by rooted generator then the answer
is yes, by~\cref{thm:main}.
The next example shows that the same cannot hold as originally stated in the
question above.

\begin{example}\label{ex:counterexample}
  Consider the subset $M$ of $\bR$ given by the points $x_{0} = 0$,
  $x_{1} = 7.5$, $x_{2} = 3$ and $x_{3} = 5$. Under the metric induced by the
  Euclidean distance on $\bR$, $M$ is a metric space, and can be made into an
  augmented metric space by defining $f(x_{i}) = i$, see~\cref{fig:counterexample_metric}.
  \begin{figure}[h]
    \centering \includegraphics{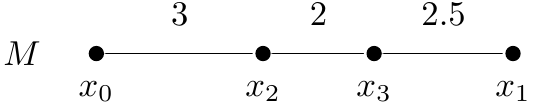}
    \caption{\label{fig:counterexample_metric} The augmented metric space
      $(M, d, f)$ of~\cref{ex:counterexample}, with $M\subset\bR$ and $f(x_{i}) = i$.}
  \end{figure}
  \begin{figure}
    \centering
    \begin{equation*}
      \begin{tikzcd}[ampersand replacement=\&,row sep=12pt, column sep=12pt]
        \sigma  = 3 \&[-5pt] \allpoints \rar      \& \allpointsvariant \rar             \& \allpointsbutzero \rar{} \& \alltogether \rar                 \& \alltogether \rar \& \alltogether \\
        \sigma  = 2 \& \threefirst \rar\uar \& \threefirst \rar\uar \& \threefirst \rar\uar    \& \threefirstvariant \rar\uar             \& \threethird \rar\uar \& \threethird \uar \\
        \sigma  = 1 \& \firstgraph \rar\uar        \& \firstgraph \rar\uar        \& \firstgraph \rar\uar  \& \firstgraph \rar\uar \& \firstgraph \rar\uar \& \firstgraphsep \uar \\
        \sigma  = 0 \& \justone \rar\uar               \& \justone \rar\uar               \& \justone \rar\uar         \& \justone  \rar\uar              \& \justone \rar\uar \& \justone \uar \\[-10pt]
        \& \myeps{0} \& \myeps{2} \& \myeps{2.5} \& \myeps{3} \& \myeps{4.5} \& \myeps{7.5}   \&
      \end{tikzcd}
    \end{equation*}
    \caption{\label{fig:counterexample} We picture the density-Rips persistent
      set of~\cref{fig:counterexample_metric}.}
  \end{figure}
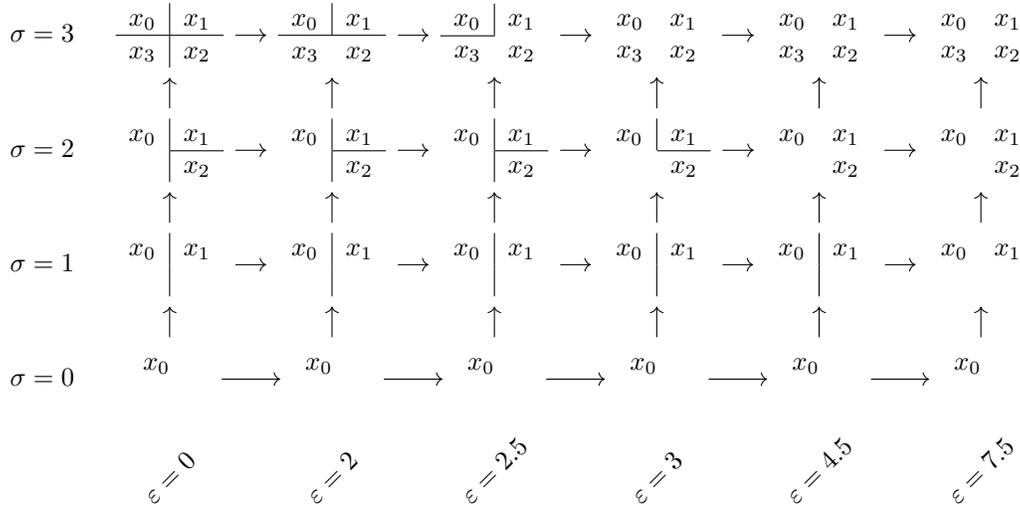

  Consider the only total order $\prec$ on $M$ compatible with $f$,
  $x_{0} \prec x_{1} \prec x_{2} \prec x_{3}$. The point $x_{1}$ has a constant conqueror:
  $x_{0}$ is the only candidate, and it is clear that, for every $i=1,\dots, 3$
  and $x' \prec x_{1}$, $u_{i}(x_{1}, x_{0}) \leq u_{i}(x_{1}, x')$, where $u_{i}$
  is the ultrametric of $M_{i}$, precisely because $x_{0}$ is the only point
  that satisfies $x'\prec x_{1}$.

  Let $S\colon P\to \cSet$ be the density-Rips persistent set constructed from
  the augmented metric space $(M, d, f)$. Here, the poset $P$ is the
  subposet of $\bR^{2}$ given by the grid
  $\Set{0, 2, 2.5, 3, 4.5, 7.5}\times\Set{0, 1, 2, 3}$, where the first
  coordinate represents the distances and the second coordinate the densities.
  We picture $S$ in~\cref{fig:counterexample}.
  Now we proceed to decompose $\free S$. First, note that $x_{3}$ is a rooted
  generator, and consider an associated idempotent $\varphi\colon S\to S$.
  By~\cref{thm:decomp}, there is an interval
  $I\coloneqq \img(\Id_{\free S} - \free\varphi)$ in the decomposition, and we
  can continue considering the persistent set $\img\varphi$. In $\img\varphi$,
  $x_{0}$ is a minimal generator. By~\cref{thm:bottom_interval} (and its proof)
  there is an idempotent $\psi\colon\img\varphi\to\img\varphi$ such that
  $I'\coloneqq\img\free\psi$ is an interval. Applying~\cref{thm:decomp} again,
  we obtain a decomposition of $\free S$ of the form
  \begin{equation*}
    I\oplus I' \oplus \img(\Id_{\free\img\varphi} - \free\psi).
  \end{equation*}
  By direct computation, it can be seen that
  $\img(\Id_{\free\img\varphi} - \free\psi)$ is isomorphic to the persistence
  module described in~\cref{fig:non_interval}. Moreover, this persistence module
  is indecomposable, which can be checked by looking at its endomorphisms: a
  persistence module $F$ is indecomposable if and only if every endomorphism of
  $F$ is either nilpotent or an isomorphism
  (see~\cite{brionRepresentationsQuivers}, and~\cite{brodzkiComplexityZerodimensionalMultiparameter2020}).
  \begin{figure}
    \centering
    \begin{equation*}
      \begin{tikzcd}[ampersand replacement=\&,row sep=30pt, column sep=15pt]
        \sigma  = 3     \& K^{2}   \rar[equal] \&
        K^{2}   \rar{\merge}    \& K \rar{}    \& 0 \rar    \& 0 \rar \& 0 \\
        \sigma  = 2     \& K^{2}   \rar[equal]\uar[equal]    \&
        K^{2}   \rar[equal]\uar[equal]    \& K^{2} \rar{\project}\uar{\merge}    \& K \rar\uar    \& 0 \rar\uar \& 0 \uar \\
        \sigma  = 1     \& K   \rar[equal]\uar{\onetotwo} \& K   \rar[equal]\uar{\onetotwo} \& K \rar[equal]\uar{\onetotwo} \& K \rar[equal]\uar[equal] \& K \rar\uar \& 0 \uar \\
        \sigma  = 0     \& 0   \rar\uar    \& 0   \rar\uar    \& 0 \rar\uar    \& 0 \rar\uar    \& 0 \rar\uar \& 0 \uar \\[-30pt]
        \& \myeps{0} \& \myeps{2} \& \myeps{2.5} \& \myeps{3} \& \myeps{4.5} \& \myeps{7.5}   \&
      \end{tikzcd}
    \end{equation*}
    \caption{\label{fig:non_interval} An indecomposable persistence module
      $F\colon P\to\cVect$, as referenced in~\cref{ex:counterexample}.}
  \end{figure}

  Note that $x_{1}$ is not a rooted generator in $\free S$. In $M_{1}$, $x_{1}$
  is its own connected component during $\varepsilon \in [0, 7.5)$, until
  $x_{0}$ joins the connected component. And in $M_{3}$ it is by itself during
  $\varepsilon\in [0, 2.5)$ and then joins the connected component of $x_{3}$,
  which is not connected to $x_{0}$ at that point. Similarly, $x_{2}$ is not
  rooted.
\end{example}

\begin{remark}
  Note that in Condition (2) of the definition of conqueror, we require that
  $x'' \prec x$. This requirement measures part of the difference between
  constant conqueror function and rooted generator for augmented metric spaces.
  If we drop this requirement, denoting the resulting concept by
  \deff{conqueror$^*$}, we suppose that $f$ is injective, and that $\prec$ is
  compatible with the order induced by $f$, then a non-minimal, with respect to
  $\prec$, point $x\in M$ has a constant \deff{conqueror$^{*}$} function if and
  only if $x$ is a rooted generator, as
  in~\cref{prop:criteria_rooted_generator}.
\end{remark}

\section{A lower bound on the number of expected
  intervals}\label{sec:lower_bound}

We apply the theory we have developed to the study of how a typical
decomposition of a persistence module coming from density-Rips might look like.
In particular, suppose we sample independently $n$ points from a common density
function $f(x)$ in $\bR^{d}$, obtaining a finite metric space
$M\subset \bR^{d}$. We can then consider the augmented metric space $(M, d, f)$,
where $f$, rather than being an estimated density, is the true underlying
density function. This setting resembles actual practice, but is more suitable
to theoretical study. Let $S$ be the density-Rips persistent set of $M$. Then,
how many intervals can we expect in the decomposition of $\free S$? The
following theorem says that, under very general conditions on $f$, regardless of
$d$, and as $n$ goes to infinity, we can at least expect $25\%$ of the summands
to be intervals.

\begin{theorem}\label{thm:lower_bound}
  Let $X_{1}, \dots, X_{n}$ be i.i.d.\ points taking values in $\bR^{d}$,
  sampled from a common density function $f(x)$ that is continuous almost
  everywhere with respect to the Lebesgue measure.

  Consider the finite augmented metric space
  $(M = \Set{X_{1}, \dots, X_{n}}, d_{M}, f)$, where $d_{M}$ is induced by the Euclidean
  metric in $\bR^{d}$, and let $S$ be its density-Rips persistent set.

  Let $\countInt_{n}$ be the random variable that counts the number of intervals
  in the indecomposable decomposition of $\free S$, and let $\countSum_{n}$ be
  the random variable that counts the total number of summands in the same
  decomposition. We have
  \begin{equation}\label{eq:lower_bound}
    \liminf_{n\to\infty} \Exp\left[\frac{\countInt_{n}}{\countSum_{n}}\right] \geq c(d),
  \end{equation}
  where $c(d)$ is a constant that depends on $d$, and $c(1) = \frac{1}{3}$,
  $c(2) \approx 0.31$ and $c(d)\downarrow\frac{1}{4}$ as $d\to\infty$.
\end{theorem}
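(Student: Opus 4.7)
The plan is to establish the pointwise almost-sure bound $\countInt_n / \countSum_n \geq N_n / n$, where $N_n$ counts the \emph{neighborly rooted} points of $M$ (\cref{def:neighborly_rooted}), and then invoke a classical limit theorem on mutual nearest neighbors to evaluate $\lim_n \Exp[N_n/n]$. The denominator bound $\countSum_n \leq n$ uses that $S$ has exactly $n$ generators, one per point of $M$, so $\free S$ has at most $n$ minimal generators as a persistence module. Since every non-zero indecomposable persistence module over a finite poset requires at least one minimal generator---witnessed at any minimal grade of its support---and the minimal number of generators is additive over direct sums, the Krull--Schmidt decomposition of $\free S$ has at most $n$ summands.

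For the numerator bound $\countInt_n \geq N_n$, observe that almost surely the values $f(X_i)$ are distinct, so every mutual nearest-neighbor pair contains exactly one neighborly rooted element, namely the point with strictly larger $f$-value. List the neighborly rooted points as $x_1, \ldots, x_{N_n}$, with corresponding nearest neighbors $y_1, \ldots, y_{N_n}$; since the $y_j$'s themselves are not neighborly rooted, the two lists are disjoint. By \cref{lem:neighborly_rooted}, $x_1$ is a rooted generator in $S = S^{(0)}$, and \cref{thm:decomp} splits off a first interval summand, leaving $S^{(1)} := \img\varphi_1$. I would proceed inductively: the claim is that each remaining $x_j$ is still a rooted generator in $S^{(j-1)}$, with associated idempotent $\psi_j$ sending $x_j$ to (the image of) $y_j$ and fixing every other generator. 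The main technical obstacle is checking well-definedness of $\psi_j$: two generators have the same image at grade $q$ in $S^{(j-1)}$ precisely when their clusters in $S_q$ lie in the same equivalence class under the identifications $x_i \sim y_i$ for $i < j$; if $x_j$ collides at grade $q$ in $S^{(j-1)}$ with some $w \neq x_j$, then $x_j$'s cluster in $S_q$ necessarily contains another point (either $w$ itself, or some $x_i$ or $y_i$ participating in a chain of identifications), so by the clustering condition of \cref{prop:criteria_rooted_generator} the cluster also contains $y_j$. Hence $\psi_j$ is well-defined, and iterating \cref{thm:decomp} peels off $N_n$ distinct interval summands.

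Finally, since each mutual NN pair contributes exactly one neighborly rooted point, symmetry of the i.i.d.\ sample gives $\Exp[N_n] = \tfrac{n}{2} P_n$, where $P_n$ is the probability that a specified sample point lies in a mutual NN pair. For i.i.d.\ samples from an absolutely continuous density on $\bR^d$, the classical theorem of Schilling and Henze establishes $P_n \to p(d)$, where $p(d)$ is a universal dimension-dependent constant with $p(1) = 2/3$ (derivable from strict local minima of gap sequences), $p(2) = 6\pi/(8\pi + 3\sqrt{3}) \approx 0.6215$, and $p(d) \downarrow 1/2$ as $d \to \infty$. Setting $c(d) := p(d)/2$ and using bounded convergence---$N_n/n \leq 1/2$---we get $\Exp[N_n/n] \to c(d)$, and combining with the pointwise bound yields $\liminf_n \Exp[\countInt_n/\countSum_n] \geq c(d)$.
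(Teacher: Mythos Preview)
Your approach is essentially the paper's: bound $\countSum_n \leq n$ by the generator count, bound $\countInt_n$ from below via iterative peeling of rooted generators coming from nearest-neighbor structure, then invoke the Henze--Schilling limit for mutual nearest neighbors. The denominator argument and the appeal to \cref{lem:neighborly_rooted} and \cref{thm:decomp} match the paper's Lemma on $\countSum_n\le n$ and \cref{lem:weakly_conn_components}.

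There is, however, a genuine gap caused by conflating two different quantities. You define $N_n$ as the number of \emph{neighborly rooted} points, but then treat it as the number of \emph{mutual nearest-neighbor pairs}. These differ: take collinear points $a,b,c$ with $b$ between and closest to both, and $f(a)<f(b)<f(c)$; then $b$ and $c$ are both neighborly rooted, with $y_c=b$. Hence your claim ``the $y_j$'s themselves are not neighborly rooted, so the two lists are disjoint'' is false, which breaks the order-independent peeling argument as written (in the example $y_c=b=x_k$ for some $k$). The equality $\Exp[N_n]=\tfrac{n}{2}P_n$ and the bound $N_n/n\le 1/2$ are likewise only valid for the mutual-NN count, not for the neighborly-rooted count. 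Separately, ``$f(X_i)$ are almost surely distinct'' does not follow from almost-everywhere continuity of $f$: a density that is constant on a set of positive measure is continuous everywhere but produces ties with positive probability.

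The paper sidesteps all of this by working directly with $2$-cycles of the nearest-neighbor graph (\cref{lem:weakly_conn_components}): distinct $2$-cycles are vertex-disjoint, and in each cycle $(x,y)$ at least one vertex is neighborly rooted (whichever satisfies $f(y)\leq f(x)$, ties broken by the fixed total order), so the associated idempotents can be applied one after another without interference. Replacing your $N_n$ by the number of $2$-cycles repairs the disjointness, the peeling, the expectation identity, and the boundedness simultaneously, and recovers exactly the paper's proof.
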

The rest of the section is dedicated to proving this theorem. The nearest
neighbor graph of a metric space plays a fundamental role.

\begin{definition}
  The \deff{nearest neighbor graph} of $M$ is the directed graph on $M$ given by
  the directed edges of the form $(x, x')$, where $x'$ is the nearest neighbor
  of $x$.
\end{definition}

Now, we are interested in estimating the number of neighborly rooted elements,
as in~\cref{def:neighborly_rooted}, as they induce an interval in the
decomposition of $\free S$. However, in general being neighborly rooted depends
on $f$. To do without the condition on $f$ we have:
\begin{lemma}\label{lem:weakly_conn_components}
  Let $(M, d_{M}, f)$ be an augmented metric space and let $S$ be its density-Rips
  persistent set. There are at least as many intervals in the indecomposable
  decomposition of $\free S$ as $2$-cycles in the nearest neighbor graph of $M$.
\end{lemma}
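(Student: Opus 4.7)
The plan is to construct a single idempotent endomorphism $\varphi$ of $S$ that simultaneously encodes all mutual-nearest-neighbor $2$-cycles, and then to argue that the ``off-image'' summand $\img(\Id_{\free S} - \free\varphi)$ produced by \cref{thm:decomp} further splits into one interval per $2$-cycle, delivering the required $k$ intervals via Krull-Schmidt.

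First I would enumerate the $2$-cycles as $\{x_1, y_1\}, \dots, \{x_k, y_k\}$, labelling each pair so that $f(x_i) \geq f(y_i)$ (breaking ties with the fixed total order on $M$). Because every point has a unique nearest neighbor, each point lies in at most one $2$-cycle, so the $2k$ labels are pairwise distinct. I then define $\varphi\colon S \to S$ on generators by $\varphi_{p_{x_i}}(x_i) = S_{p_{y_i}\to p_{x_i}}(y_i)$ and $\varphi_{p_z}(z) = z$ for every other generator $z$. The verification that $\varphi$ is a well-defined idempotent mirrors the proof of \cref{prop:criteria_rooted_subset}, hinging on the following observation: whenever $x_i$ lies in a non-singleton component of $\cG_\varepsilon(M_\sigma)$, some point of $M_\sigma$ is within distance $\varepsilon$ of $x_i$, forcing $\varepsilon \geq d(x_i, y_i)$; combined with $y_i \in M_\sigma$ (since $f(y_i) \leq f(x_i) \leq \sigma$), this places $y_i$ directly edge-connected to $x_i$, so $y_i$ shares the same component.

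After invoking \cref{thm:decomp} to get $\free S \cong \img(\Id_{\free S} - \free\varphi) \oplus \free(\img\varphi)$, the main step is to identify
\[
  \img(\Id_{\free S} - \free\varphi) \cong \bigoplus_{i=1}^{k} \cI(U_i), \quad U_i := \Set{(\varepsilon, \sigma) \in P \given \sigma \geq f(x_i),\ \varepsilon < d(x_i, y_i)},
\]
the set of grades at which $\{x_i\}$ forms a singleton component. On any basis element $z$ of $\free S_q$, $(\Id - \free\varphi)(z)$ vanishes unless $z = \{x_i\}$, in which case it produces $[x_i] - [y_i]$, serving as the generator of the $i$-th summand at $q$. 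Under the transition maps, this generator is sent to $[x_i] - [y_i]$ at $q' \geq q$, which remains the $i$-th generator if $q' \in U_i$, and vanishes otherwise (because then the component of $x_i$ already contains $y_i$, by the observation above). Since each $U_i$ is a product of intervals in the two coordinate directions of $P$, it is convex and connected, so $\cI(U_i)$ is a valid interval module. The main obstacle will be this last decomposition step: showing that the differences $[x_i] - [y_i]$, across all $i$ and all grades $q \in U_i$, really give a direct-sum splitting of $\img(\Id_{\free S} - \free\varphi)$ both pointwise and compatibly with the transition maps. Once this is in hand, Krull-Schmidt produces the $k$ desired interval summands.
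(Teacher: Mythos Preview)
Your argument is correct and differs from the paper's route in an interesting way. The paper treats the $2$-cycles one at a time: for each $2$-cycle it builds a separate idempotent $\varphi_i$ sending the higher-density vertex to its partner, invokes \cref{thm:main} to peel off one interval, and then iterates on the remaining persistent set. You instead build a single idempotent $\varphi$ handling all $k$ cycles simultaneously and then explicitly decompose $\img(\Id_{\free S}-\free\varphi)$ as $\bigoplus_i \cI(U_i)$. The key verifications you sketch go through: well-definedness of $\varphi$ follows exactly as you describe (any non-singleton component containing $x_i$ already contains $y_i$); the $2k$ points are pairwise distinct so idempotence is immediate; and at each grade the nonzero vectors $[\{x_i\}]-[\text{comp of }y_i]$ are linearly independent because no component of a $y_i$ can equal one of the singletons $\{x_j\}$ (that would force $y_i=x_j$, which you ruled out). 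The transition maps then send each such vector either to the corresponding vector at the larger grade or to zero, giving the claimed direct-sum splitting into interval modules. Your approach buys an explicit description of the intervals $U_i$, while the paper's iterative peeling is shorter to write and plugs directly into \cref{thm:main} without needing the pointwise linear-independence check. One small notational point: your ``$[x_i]-[y_i]$'' is really $[\{x_i\}]-[\text{component of }y_i\text{ at }q]$, since the component of $y_i$ need not be a singleton; this does not affect the argument but is worth stating precisely.
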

\begin{proof}
  We can assume without loss of generality that $\abs{M} \geq 2$. Let $G$ be the
  nearest neighbor graph of $M$. The only cycles in this graph are precisely the
  2-cycles, and each weakly connected component of $G$ contains exactly one
  2-cycle (see~\cite{eppsteinNearestNeighborGraphs}).

  Let $C_{1}, \dots, C_{k}$ be the weakly connected components of $G$. Fix
  $i \in \Set{1, \dots, k}$, and let $x, y\in M$ be such that $(x, y)$ and
  $(y, x)$ is the 2-cycle in $C_{i}$. Either $f(y)\leq f(x)$ or $f(x)\leq f(y)$,
  and either $x$ is neighborly rooted, $y$ is neighborly rooted, or both are
  neighborly rooted. Say $x$ is neighborly rooted, and define an endomorphism
  $\varphi_{i}\colon S\to S$ by setting
  \begin{equation*}
    (\varphi_{i})_{p_{z}}(z) =
    \begin{cases}
      S_{p_{y}\to p_{x}}(y), & \text{if $x = z$,}\\
      z, & \text{otherwise,}
    \end{cases}
  \end{equation*}
  for every generator $z\in S_{p_{z}}$. Such an endomorphism is well-defined as
  shown in~\cref{prop:criteria_rooted_generator}.

  Constructing, for each $i$, an idempotent $\varphi_{i}$ as above, it is clear
  that we can iteratively peel off the associated intervals, yielding the desired
  conclusion.
\end{proof}

Naturally, the number of 2-cycles is half the number of points that are the
nearest neighbor of its nearest neighbor. The problem of estimating the
probability for a point to be the nearest neighbor of its nearest neighbor,
assuming a random point process, has been studied by multiple authors
(see~\cite{schillingMutualSharedNeighbor1986,henzeProbabilityThatRandom1986,henzeFractionRandomPoints1987,coxReflexiveNearestNeighbours1981,eppsteinNearestNeighborGraphs}).

In our case, when we have $X_{1}, \dots, X_{n}$ i.i.d.\ points in $\bR^{d}$
sampled from a common density function $f$ under the conditions
of~\cref{thm:lower_bound}, by~\cite[Theorem 1.1]{henzeFractionRandomPoints1987},
and letting $N_{i,n}$ denote the probability event that $X_{i}$ is the nearest
neighbor of its nearest neighbor, we have
\begin{equation}\label{eq:prob_nn}
  \lim_{n\to\infty}\Prob(N_{i,n}) = b(d),
\end{equation}
where $b(d)$ is the volume of a unit $d$-sphere divided by the volume of the
union of two unit spheres with centers at distance $1$. In fact,
$b(1) = \frac{2}{3}$, $b(2) \approx 0.621$, and $b(d)\downarrow \frac{1}{2}$ as
$d\to\infty$ (see~\cite[Table 2]{schillingMutualSharedNeighbor1986}), and we
define $c(d)\coloneqq \frac{b(d)}{2}$.

We are now ready to finish the proof of~\cref{thm:lower_bound} at the start of
the section. Applying~\cref{lem:weakly_conn_components} and the linearity of
expectation, it holds
\begin{equation*}
  \Exp[\countInt_{n}] \geq
  \Exp\left[\sum_{i=1}^{n}\frac{I(N_{i,n})}{2}\right] =
  \sum_{i=1}^{n}\frac{\Exp[I(N_{i,n})]}{2} = \sum_{i=1}^{n}\frac{\Prob(N_{i,n})}{2},
\end{equation*}
where $I(N_{i})$ is the indicator random variable of $N_{i,n}$.
By~\cref{eq:prob_nn} we have
\begin{equation*}
  \liminf_{n\to\infty} \Exp\left[\frac{\countInt_{n}}{n}\right] \geq \frac{b(d)}{2} = c(d).
\end{equation*}
Finally, noting that the number of summands in the decomposition is bounded by
the number of points, $\countSum_{n}\leq n$\ifarxiv{}, by the lemma
below\else{} (see full version)\fi{}, \cref{eq:lower_bound}
of~\cref{thm:lower_bound} follows, finishing the proof.

\ifarxiv{}
\begin{lemma}
  Let $(M, d_{M}, f)$ be an augmented metric space, and let $S\colon P\to\cSet$ be
  its density-Rips persistent set. Then any decomposition of $\free S$ consists
  of at most $n = \abs{M}$ summands.
\end{lemma}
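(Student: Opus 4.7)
The plan is to bound the number of summands by an invariant of $\free S$ counting its minimal generators. For a persistence module $F\colon P\to\cVect$ over the finite poset $P$, I would define
\begin{equation*}
  \mu(F) \coloneqq \sum_{p\in P} \dim\Bigl(F_p \,\Big/ \textstyle\sum_{q<p} F_{q\to p}(F_q)\Bigr),
\end{equation*}
which records the total number of minimal generators of $F$. The first step is to observe that $\mu$ is additive on direct sums: this follows because both images of linear maps and quotients of vector spaces commute pointwise with (finite) direct sums, so the same is true of the construction $F\mapsto F_p / \sum_{q<p} F_{q\to p}(F_q)$ at each grade.

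Next I would compute $\mu(\free S)$ in terms of the persistent set $S$. By the remark after~\cref{def:generator}, the generators of $S$ are in bijection with the points of $M$, so there are exactly $n = \abs{M}$ of them in total. At each grade $p$, the quotient $\free S_p / \sum_{q<p} \free S_{q\to p}(\free S_q)$ is spanned by the classes $[x]$ with $x$ a generator of $S$ at grade $p$, so its dimension is bounded above by the number of such generators. Summing over $p$ yields $\mu(\free S)\leq n$.

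Finally, given any decomposition $\free S \cong F_1\oplus\cdots\oplus F_k$, I would argue that each nonzero summand $F_i$ contributes at least $\mu(F_i)\geq 1$: pick a minimal element $p_0$ in the (nonempty) support of $F_i$; then $(F_i)_q = 0$ for all $q<p_0$, so the subtracted sum of images is zero and $\mu(F_i)\geq \dim(F_i)_{p_0}\geq 1$. By additivity of $\mu$, we then obtain
\begin{equation*}
  k \;\leq\; \sum_{i=1}^{k} \mu(F_i) \;=\; \mu(\free S) \;\leq\; n,
\end{equation*}
as required. There is no serious obstacle in this argument; the only point deserving care is the verification that $\mu$ is additive on direct sums and that the linearization $\free$ preserves the bijection between generators of $S$ and points of $M$.
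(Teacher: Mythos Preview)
Your proposal is correct and follows essentially the same approach as the paper: your invariant $\mu(F)$ is precisely $\sum_{p\in P}\beta_{0}(F)(p)$, the total number of minimal generators as measured by the multigraded $0$-th Betti numbers that the paper invokes, and the three steps (additivity on direct sums, the bound $\mu(\free S)\leq n$ from the $n$ generators of $S$, and $\mu(F_i)\geq 1$ for each nonzero summand) match the paper's argument exactly. The only cosmetic difference is that the paper asserts $\mu(\free S)=n$ (which also follows from your reasoning, since the generators at each grade are linearly independent modulo the span of non-generators), whereas you only use the inequality---but that is all the lemma requires.
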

\begin{proof}
  Given a persistence module $F\colon P\to\cVect$, denote by
  $\beta_{0}(F)\colon P\to\mathbb{N}$ the function that assigns to each $p\in P$
  the multigraded $0$-th Betti number of $F$ at $p\in P$ (we refer
  to~\cite{lesnickComputingMinimalPresentations2022} for their definition).
  Precisely because $S$ has $n$ generators, it is not hard to see that
  $\sum_{p\in P}\beta_{0}(\free S)(p) = n$.

  Consider a decomposition $\free S\isomorphic X_{1}\oplus\cdots\oplus X_{k}$.
  Since Betti numbers are additive, we have
  $\beta_{0}(\free S) = \sum_{i=1}^{k}\beta_{0}(X_{i})$. Therefore, we can write
  \begin{equation}
   \sum_{p\in P} \beta_{0}(\free S)(p) = \sum_{i=1}^{k}\sum_{p\in P}\beta_{0}(X_{i})(p) = n.
  \end{equation}
  Now, since $\sum_{p\in P}\beta(X_{i})(p) \geq 1$ if $X_{i}\neq 0$, it follows
  that $k\leq n$.
\end{proof}
\fi{}

\section{Discussion}

Although we have focused our attention to augmented
metric spaces and density-Rips, rooted subsets can be applied to other
persistent sets. Of special interest for us is the degree-Rips
filtration~\cite{blumbergStability2ParameterPersistent2022a} of a metric space,
where we filter by the degree of the vertices in the underlying geometric
graphs. To accommodate this situation, one could modify condition 1
of~\cref{prop:criteria_rooted_generator} to take into account the evolution of
the degrees, rather than the density. We leave an in-depth treatment of this
case for future work.

We have seen, both in our lower bound of~\cref{sec:lower_bound} and in
preliminary experimental evaluation, that we can expect to find many intervals
in the decomposition of those persistence modules coming from geometry, at least
in the cases considered here. This is in contrast to the purely algebraic
setting, where, in light of recent
developments~\cite{bauerCotorsionTorsionTriples2020,bauerGenericTwoParameterPersistence2022},
looking for a decomposition might fall short.

\bibliographystyle{amsplainurl}{}
\bibliography{refs}

\end{document}